\documentclass[12pt]{amsart}
\usepackage{amsmath}
\usepackage{amssymb}
\usepackage{amsthm}
\usepackage{amsfonts} 
\usepackage{hyperref}
\usepackage{graphicx}

\addtolength{\textwidth}{1.1cm}
\hoffset=-0.5cm

\numberwithin{equation}{section}

\allowdisplaybreaks

\newtheorem{theorem}{Theorem}[section]
\newtheorem{proposition}[theorem]{Proposition}
\newtheorem{lemma}[theorem]{Lemma}
\newtheorem{remark}[theorem]{Remark}

\renewcommand{\epsilon}{\varepsilon}
\renewcommand{\phi}{\varphi}

\newcommand{\abs}[1]{\left\vert #1\right\vert}
\newcommand{\1}[1]{{\mathbf 1}{\{#1\}}}
\newcommand{\R}{\mathbb{R}}

\newcommand{\Z}{\mathbb{Z}}

\newcommand{\CC}{\mathcal{C}}
\newcommand{\LL}{\mathcal{L}}
\newcommand{\TT}{\mathcal{T}}
\newcommand{\GG}{\mathcal{G}}

\newcommand{\WW}{\mathfrak{W}}

\newcommand{\V}{\mathcal{V}}

\newcommand{\I}{\mathcal{I}}

\newcommand{\IP}{\mathbb{P}}

\newcommand{\bP}{\mathbf{P}}
\newcommand{\bE}{\mathbf{E}}

\newcommand{\Po}{{\mathrm P}^{\omega}}

\newcommand{\Psrw}{P^{\text{\tiny{SRW}}}}

\newcommand{\bv}{\mathbf{v}}
\newcommand{\bk}{\mathbf{k}}
\newcommand{\bj}{\mathbf{j}}

\newcommand{\hg}{{\hat\gamma}}
\newcommand{\hG}{{\widehat\GG}}
\newcommand{\hH}{{\widehat H}}

\newcommand{\hX}{{\widehat X}}

\newcommand{\cyl}{\mathsf{Cyl}}

\newcommand{\capa}{\mathop{\mathrm{cap}}}

\newcommand{\lf}{\lfloor}
\newcommand{\rf}{\rfloor}

\newcommand{\xe}{{x^{\text{entry}}}}
\newcommand{\xt}{{x^{\text{trap}}}}

\newcommand{\eps}{\varepsilon}

\title[]{Biased random walks on the interlacement set}

\author[A.~Fribergh]{Alexander FRIBERGH}
\address{Universit\'e de Montr\'eal,
Department of Mathematics and Statistics,
 Pavillon Andr\'e-Aisenstadt  
    2920, chemin de la Tour, Montr\'eal H3T 1J4,
Canada} 
\email{fribergh@dms.umontreal.ca}

\author[S.~Popov]{Serguei POPOV}
\address{Department of Statistics, Institute of Mathematics,
 Statistics and Scientific Computation, University of Campinas --
UNICAMP, rua S\'ergio Buarque de Holanda 651,
13083--859, Campinas SP, Brazil} 
\email{popov@ime.unicamp.br}

\keywords{Random walk in random environment, interlacement set} 
\subjclass[2010]{Primary 60K37; secondary 60G50, 82C41}

\begin{document}

\begin{abstract}
We study a biased random walk on the interlacement 
set of~$\Z^d$ for~$d\geq 3$. 
Although the walk is always transient, we can show, in the case $d=3$, that
for any value of the bias the walk 
has a zero limiting speed and actually moves 
slower than any power. 
\end{abstract}

\maketitle

\section{Introduction}
\label{s_introres}

The model of 
random interlacements was recently
 introduced by Sznitman in~\cite{Szn10},
and detailed accounts can be found in the survey~\cite{CT12} and
the recent book~\cite{DRS14}.
Loosely speaking, random interlacements
in~$\Z^d$, $d\geq 3$, is a stationary 
Poissonian soup of (transient) doubly infinite simple random
walk trajectories. The \emph{level} of random interlacements
 is an additional 
parameter~$u>0$ entering the intensity measure of the Poisson 
process; as the value of~$u$ increases,
more trajectories are added.
The sites of~$\Z^d$ that are not touched by the trajectories 
constitute
the \emph{vacant set}~$\V^u$ and the union of trajectories
is the \emph{interlacement set}~$\I^u$, so that $\V^u=\Z^d\setminus\I^u$.
It is possible to show that~$\I^u$ is connected 
for all~$u>0$ a.s., cf.\ Theorem~1.5 of~\cite{CP}. In fact, it is possible to construct
the random interlacements simultaneously for all $u>0$
in such a way that $\V^{u_1}\subset \V^{u_2}$ if $u_1>u_2$.
We refer to the above references for the formal definitions (we will give a constructive definition in the next section).

Describing the geometrical properties of a random environment such as the interlacement set 
can be done in several ways. One of the methods which 
has been popular in last decade is to study random walks 
in random environments (RWRE). 
In this paper we aim to do just 
that by considering the biased random walk on the interlacement set. 

This is not the first study of RWRE on the interlacement set. Indeed 
it has been shown in~\cite{PRS15} that the invariance principle 
holds for the simple random walk on the interlacement
cluster; by which we mean that ``typical'' displacement of the 
particle by time~$n$ is of order~$\sqrt{n}$ and that the rescaled
 process converges to a Brownian motion. This result is similar to 
what is observed for the simple random walk on the supercritical 
percolation cluster \cite{BB,MP,SS}.

In the context of the biased random walk on supercritical 
 percolation clusters in~$\Z^d$, it has been shown that 
the walk experiences a phase transition from a positive-speed 
phase (for small biases) to a zero-speed regime for large biases 
in which the walk moves as~$n^{\gamma}$ for some~$\gamma<1$ (see~\cite{FH}). We show that, 
for biased random walks
 on the interlacement set of~$\Z^3$, 
the situation is radically different in the sense that for 
any biases the walk has zero-speed 
 and actually moves slower than any power in~$n$.

\subsection{The model}
\label{s_model}
For $x,y\in \Z^d$, let $x\cdot y$ be the usual scalar product,
and we denote by $e_1,\ldots,e_d$ the unit vectors
of the canonical orthonormal basis. Also, $\|\cdot\|$ stands
for the Euclidean norm. We write $x\sim y$ whenever $\|x-y\|=1$,
i.e., $x$ and~$y$ are neighbours in~$\Z^d$.

Let us denote by~$\Psrw_x$ the law of the $d$-dimensional simple random
walk $(S_n,n\geq 0)$ started from~$x$. 

For any $A\subset\Z^d$ let (using the convention $\min \emptyset = +\infty$) 
\begin{align}
T_A &= \min\{k\geq 1: S_k\in A\} \label{hitting_t}
\end{align}
be the hitting time of~$A$, and write $T_x:=T_{\{x\}}$ 
for $x\in\Z^d$. 
We define the harmonic measure
\[
 e_A(x) = \Psrw_x[T_A=\infty]\mathbf{1}_A;
\]
the \emph{capacity} of~$A$ is defined by
\[
 \capa(A) = \sum_{x\in A} e_A(x),
\]
 see e.g.\ Section~6.5 of~\cite{LL10}.

Let us give a ``constructive''
description of random interlacements at level~$u$ observed
on a finite set~$A$. Namely,
\begin{itemize}
 \item take a Poisson($u\capa(A)$) number of particles;
 \item place these particles on the boundary of~$A$
 independently, with law
 $\overline{e}_A = \big((\capa A)^{-1} e_A(x), x\in A\big)$;
 \item let the particles perform independent simple random walks
 (by transience, each walk only leaves a finite trace
 on~$A$).
\end{itemize}
As a consequence of the above, we obtain the following useful
identity:
\begin{equation}
\label{eq_vacant>3}
 \bP[A\subset \V^u] = \exp\big(-u \capa(A)\big)
\quad \text{for all finite $A\subset\Z^d$}.
\end{equation}
For fixed~$u$, 
we also define $\bP_0[\,\cdot\,]:= \bP[\,\cdot\mid 0\in\I^u]$
to be the law of the interlacement set conditioned to contain 
the origin.

Now, we define the biased random walk on the interlacement
cluster, which is the main object of study of this paper.
Fix a parameter~$\beta>1$ (which accounts for the bias).
Let us define the \emph{conductances}
on the edges of~$\Z^d$ in the following way:
\[
 c(x,y) = \begin{cases}
           \beta^{\max(x\cdot e_1,y\cdot e_1)}, & \text{if }x,y\in\I^u, x\sim y,\\
           0, & \text{otherwise},
          \end{cases}
\]
and we call the collection of all conductances
$\omega = \big\{c(x,y), x,y\in\Z^d\big\}$ the random environment.
Consider a random walk $(X_n, n\geq 0)$ in this environment
of conductances; i.e., its transition probabilities
are given by
\begin{equation}
\label{def_trans_probs}
 q^{\omega}(x,y):=\Po[X_{n+1}=y \mid X_n=x] = 
\begin{cases}
 \frac{c(x,y)}{\sum_{z}c(x,z)}, & \text{if } x,y\in\I^u, x\sim y,\\
0, & \text{otherwise}
\end{cases}
\end{equation}
(the superscript in~$\Po$ indicates that we are dealing
with the ``quenched'' probabilities, i.e., when the underlying 
graph is already fixed). As usual, we abbreviate 
$\Po_x[\,\cdot\,]:=\Po[\,\,\cdot\mid X_0=x]$.
For the sake of cleanness, we work under the measure~$\bP_0$;
then, we are able to choose the
starting point~$X_0$ to be the origin.
Let us also define
$\IP[\cdot] = \int \Po_0[\cdot] \, d\bP_0$ to be the \emph{averaged}
(a.k.a.\ \emph{annealed}) probability measure for the 
walk starting at the origin. 


\subsection{Results}
\label{s_results}
The first result is that the random walk is transient. 
\begin{theorem}
\label{theorem_trans}
For $d\geq 3$, we have 
\[
\lim_{n\to \infty} \|X_n\|=\infty, \qquad 
\text{$\IP$-a.s.},
\]
for any fixed drift~$\beta>1$ and any intensity parameter~$u>0$
of the random interlacements.
\end{theorem}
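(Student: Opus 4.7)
The walk on $\I^u$ is reversible with respect to the stationary measure $\pi(x)=\sum_y c(x,y)$, so by the standard Doyle--Snell theory it is transient (and on a locally finite graph of bounded degree this is equivalent to $\|X_n\|\to\infty$ $\Po_0$-a.s.) if and only if the effective resistance $R^{\omega}_{\mathrm{eff}}(0,\infty)$ in the weighted graph $(\I^u,c)$ is finite. By Thomson's principle, it is then enough to exhibit, for $\bP_0$-a.e.\ $\omega$, a unit flow from $0$ to infinity in $\I^u$ with finite energy $\sum_{e}r(e)\theta(e)^2$, where $r(e)=c(e)^{-1}$.

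The exponential growth of the conductances $c(x,y)=\beta^{\max(x\cdot e_1,y\cdot e_1)}$ is very forgiving for this construction: any unit flow supported on a single infinite self-avoiding nearest-neighbour path $0=y_0,y_1,y_2,\ldots$ in $\I^u$ has energy $\sum_n \beta^{-\max(y_n,y_{n+1})\cdot e_1}$, and this series converges as soon as $y_n\cdot e_1\ge \kappa n$ for all large $n$ and some deterministic $\kappa>0$. Thus Theorem~\ref{theorem_trans} reduces to the purely geometric statement that, $\bP_0$-a.s., the cluster of the origin in $\I^u$ contains an infinite nearest-neighbour path that makes positive linear progress in the $+e_1$ direction.

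To produce such a path I would run a renormalisation (coarse-graining) argument. Partition the half-space $\{x\cdot e_1\ge 0\}$ into boxes of a large side length $L$ stacked along the $e_1$-axis and declare a box \emph{good} if $\I^u$ restricted to a slightly enlarged version of it contains a left-to-right crossing that matches up with the analogous crossings in its $\pm e_1$ neighbours. For $L$ large, a single box is good with probability close to $1$, so combining this with the decoupling inequalities for random interlacements (see Sznitman~\cite{Szn10} and Drewitz--R\'ath--Sznitman~\cite{DRS14}) reduces the existence of an infinite chain of good boxes going from a neighbourhood of $0$ out to $+e_1$-infinity to a standard Peierls-type argument in a stochastically dominated supercritical percolation model; the linear progress of the path then comes out of the coarse-grained geometry automatically.

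\textbf{The main obstacle} is precisely the handling of the correlations in this renormalisation step: unlike in Bernoulli percolation, the random interlacements $\I^u$ have only polynomially decaying correlations, so naive independence-based comparisons are not available and the interlacement-specific decoupling machinery of~\cite{Szn10,DRS14} must be invoked with a quantitative choice of scales. Once the rightward infinite path is secured, the flow-energy estimate of the second paragraph is immediate from the exponential growth of the conductances, and transience follows.
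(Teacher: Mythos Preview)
Your overall scheme---prove finite effective resistance by sending a unit flow along a single infinite self-avoiding path whose $e_1$-coordinate is well controlled---is exactly what the paper does. The divergence is in how that path is built, and the paper finds a much lighter route than the renormalisation/decoupling programme you outline.

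First, the paper does not need linear progress $y_n\cdot e_1\ge\kappa n$; it is enough that the path stay in a fixed cone $K(N)=\{x:x\cdot e_1>-N,\ |x\cdot e_i|\le N+x\cdot e_1\}$ for some random $N=N(\omega)$. At $e_1$-level $\ell$ this cone has $O((N+\ell)^{d-1})$ sites, so a self-avoiding path in $K(N)$ uses at most $O((N+\ell)^{d-1})$ edges of resistance $\beta^{-\ell}$, and $\sum_\ell (N+\ell)^{d-1}\beta^{-\ell}<\infty$. Second, to produce the path the paper simply takes the successive points $z_0=0,z_1,z_2,\ldots$ of $\I^u$ on the ray $\{ke_1:k\ge 0\}$: the capacity of a segment together with~\eqref{eq_vacant>3} gives stretched-exponential tails for the gaps, and the internal-distance estimate of \v{C}ern\'y--Popov~\cite{CP} (their Theorem~1.3) gives stretched-exponential tails for the graph distance $\rho_u(z_m,z_{m+1})$ inside $\I^u$. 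A union bound then shows $\bP_0[\rho_u(z_m,z_{m+1})\le (n+m)/3\text{ for all }m\ge 0]\ge 1-e^{-\gamma n^\delta}$, and Borel--Cantelli yields the path in $K(N)$ for some finite $N(\omega)$.

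So the ``main obstacle'' you flag---handling the polynomial correlations of $\I^u$ via decoupling---is entirely bypassed: the result of~\cite{CP} already packages all the interlacement-specific work needed here. Your coarse-graining plan is plausible (though note that a purely one-dimensional stack of boxes along $e_1$ cannot be handled by a Peierls argument, so you would have to allow transverse wandering and then recover the $e_1$-progress), but it is substantially heavier and essentially re-derives, in weaker form, what~\cite{CP} provides off the shelf.
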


\begin{remark} We believe that
 $\lim_{n\to \infty} \|X_n \cdot e_1 \|=\infty$ $\IP$-a.s.\ 
for $d\geq 3$. The natural way of proving 
this would be to adapt the proof of Lemma 1.1 in~\cite{Sznitman}. 
However, this proof requires an estimate on the number of 
left-right crossings (i.e., in the direction~$e_1$) of a large box, 
which seems to be difficult to obtain in the case of 
random interlacement (this estimate is used in equation~(1.35)
 of~\cite{Sznitman}). We decided not to pursue this 
in this paper, to be able to focus on the surprising behaviour of 
the speed. 
\end{remark}

Our main result is that, in three dimensions, the biased random walk
on the interlacement cluster has subpolynomial speed:
\begin{theorem}
\label{theorem_3d}
For $d=3$, we have 
\[
\lim_{n\to \infty} \frac{\ln \|X_n\|} {\ln n} =0, \qquad 
\text{$\IP$-a.s.},
\]
for any fixed drift~$\beta>1$ and any intensity parameter~$u>0$
of the random interlacements.
\end{theorem}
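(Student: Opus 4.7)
My plan is to produce the superpolynomial slowdown by building, inside $\I^u$, a dense family of \emph{traps} oriented in the direction of the bias, each of which detains the walk for a time exponential in its depth. For $x\in\I^u$ and $L\in\N$, call $x$ an entrance to a trap of depth $L$ if some piece of an interlacement trajectory starts at $x$ and reaches the hyperplane $\{y\cdot e_1=x\cdot e_1+L\}$ while staying in a suitable tube, and the portion of $\I^u$ strictly above $x$ in that tube is connected to the rest of the cluster only through $x$. In short, the trap is a single piece of a SRW trajectory running from the main cluster in the direction $+e_1$, together with an isolation condition on the remaining Poissonian trajectories that prevents short-cuts.

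Two quantitative inputs drive the argument. The first is an \emph{abundance} estimate: combining the three-dimensional Green-function asymptotic $G(0,Le_1)\asymp L^{-1}$ with the constructive Poissonian description of the interlacement set encoded in \eqref{eq_vacant>3}, I expect that a fixed $x\in\Z^3$ is a trap entrance of depth $L$ with probability at least $c L^{-c_0}$ for some $c_0>0$; the isolation event is controlled via a capacity bound for the tube surrounding the finger. The second is a \emph{trapping-time} estimate: for the reversible walk with $c(x,y)=\beta^{\max(x\cdot e_1,y\cdot e_1)}$, the commute-time identity gives $\bE_x[\text{round trip to tip}]=\pi(\text{trap})\cdot R_{\mathrm{eff}}(x,\text{tip})$, and since $\pi(y)\propto\beta^{y\cdot e_1}$ puts a mass of order $\beta^L\pi(x)$ at the tip, while the effective resistance along the essentially one-dimensional finger is at most a constant times $\pi(x)^{-1}$, we obtain an expected exit time of order $\beta^L$ from the trap once it is entered.

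Combining these in a box $B_R$ yields, with $\bP_0$-probability close to one, a trap of depth $L\asymp R^{3/c_0}$; a hitting-probability estimate, together with the transience provided by Theorem~\ref{theorem_trans}, then shows that the walk enters some such trap before leaving $B_R$ and is frozen there for time of order $\beta^L=\exp(cR^{\delta})$ for some $\delta>0$. Inverting this relation between displacement and time forces $\|X_n\|\leq C(\log n)^{1/\delta}=n^{o(1)}$ along a subsequence $n_k=2^k$, and a Borel--Cantelli argument promotes the bound to the $\IP$-a.s.\ statement. The main obstacle, exactly as in the analogous theorem for biased walks on percolation clusters~\cite{FH}, will be making the trap \emph{genuinely} isolated from $\I^u$: because the interlacement set is dense and long-range correlated, preventing alternative escape routes via other trajectories demands a careful decoupling argument exploiting the Poissonian independence of distinct trajectories, together with capacity estimates for the surrounding tube. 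A closely related subtlety is to ensure that the biased walk actually visits the mouth of some deep trap rather than skirting around it, which will require a quantitative lower bound on the corresponding hitting probability on the $\I^u$-graph.
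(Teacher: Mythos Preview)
Your strategy—build single-trajectory dead ends in the $+e_1$ direction, isolate them by a vacancy condition on a surrounding tube, and use reversibility to extract a delay of order $\beta^L$—is exactly the paper's. But your abundance estimate is wrong in a way that breaks what follows. You assert that a fixed site is a trap entrance of depth $L$ with probability at least $cL^{-c_0}$. The isolation event alone already rules this out: in $\Z^3$ any tube of length $L$ (regardless of sublinear width) has capacity of order $L/\ln L$, so by~\eqref{eq_vacant>3} the probability that all other interlacement trajectories avoid it is $\exp(-cuL/\ln L)$, which is not bounded below by any negative power of $L$. Consequently $B_R$ does \emph{not}, with probability close to one, contain a trap of depth $L\asymp R^{3/c_0}$; traps of polynomial depth simply do not occur, and the bound $\|X_n\|\leq C(\log n)^{1/\delta}$ is not obtainable by this route.

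The paper repairs this with a different balancing. Rather than seeking ever-deeper traps, one fixes a large constant $M$ and looks for traps of depth $M\ln n$ at displacement scale $n$. The surrounding ``quiver'' $Q(\xe,M,n)$ then has capacity at most $\gamma M\ln n/\ln\ln n$ (Lemma~\ref{l_capacity}), so its vacancy costs only $n^{-O(1/\ln\ln n)}=n^{-o(1)}$; the dominant cost becomes forcing one trajectory to draw the finger-and-loop, which is $n^{-\gamma/M}$ (Lemma~\ref{l_dig_a_trap}). The delay in such a trap is $n^{\Theta(M\ln\beta)}$, and since $M$ is arbitrary one obtains $\limsup\ln\|X_n\|/\ln n\leq 3/(M\ln\beta)\to 0$. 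The mechanism that replaces your ``hitting-probability estimate'' is a sequence of nested cones $\CC_M(jn^{1/3})$, $j\leq n^{2/3}$: with high probability the walk exits each on its positive side (Lemma~\ref{lem_backtrack}), and at every exit point a fresh trap is sought just ahead, giving $n^{2/3}$ essentially independent tries against success probability $n^{-\gamma/M}$. Finally, the conditional-on-the-past lower bound you rightly flag as delicate is handled by two dedicated tools rather than a generic decoupling: a conditional decoupling inequality for the quiver vacancy (Proposition~\ref{p_main3}, from~\cite{AP}), and a combinatorial loop-insertion lemma (Lemma~\ref{l_loop}) showing that adding a prescribed loop to an existing local-time configuration on $A$ costs only a factor $(2d)^{-m}$.
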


As mentioned in the introduction, 
this picture is very different from the one we would get 
by considering the simple random walk on a supercritical 
percolation cluster in $\Z^d$. 
As will become clear in the course of the proof, the above result is 
genuinely three-dimensional. Indeed, using the same methods as 
for our main result, it is possible to show that, in dimensions 
$d\geq 4$,
\emph{for large values of the bias} the walk still has zero speed 
(see Theorem~\ref{t_big_d_0speed} below); but we conjecture that 
if~$\beta$ is close enough to~$1$, then the biased random walk 
should have positive 
speed just as in the case of the biased random walk on percolation 
clusters (see~\cite{FH}). 
However, it is unclear how to prove this result because of the 
difficulties to build a regeneration structure for the walk,
due to the lack of the independence property of the environment.

\begin{theorem}
\label{t_big_d_0speed}
 Let $d\geq 4$, and let $\eps>0$ be arbitrary.
Then, for all large enough $\beta>1$
(depending on~$d$ and~$\eps$) it holds that 
\[
\limsup_{n\to \infty} \frac{\ln \|X_n\|} {\ln n} \leq \eps, \qquad 
\text{$\IP$-a.s.}.
\]
\end{theorem}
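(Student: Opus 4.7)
The plan is to identify dead-end trap configurations in $\I^u$ in which the biased walk dwells for time of order $\beta^L$, and to balance their density against the desired sub-polynomial displacement. In $d\geq 4$ the natural finger-shaped traps are exponentially (rather than polynomially) rare, so the balance only closes when $\beta$ is taken large; this is where the ``large~$\beta$'' hypothesis enters. For an integer $L\geq 1$, say that $x_0\in \I^u$ is an \emph{$L$-trap base} if the axial segment $T(x_0)=\{x_0+ke_1:0\leq k\leq L\}$ lies in $\I^u$, while every vertex of $\Z^d\setminus T(x_0)$ that neighbours some $x_0+ke_1$ with $k\geq 1$ and has $e_1$-coordinate at least $x_0\cdot e_1+1$ lies in $\V^u$. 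Then $T(x_0)$ is a forward-pointing dead-end whose only opening is at $x_0$.

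The first estimate I would establish is $\bP_0[0\text{ is an }L\text{-trap base}]\geq \gamma^L$ for some $\gamma=\gamma(u,d)\in(0,1)$. The ``wall'' of forbidden sites has $O(L)$ points and capacity $O(L)$, so by~\eqref{eq_vacant>3} it lies in $\V^u$ with probability at least $e^{-CuL}$; and the tube being in $\I^u$ can be forced by exhibiting a single interlacement trajectory through $0$ that makes $L$ consecutive $+e_1$ jumps, which contributes a factor at least $\rho^L$ (the $(2d)^{-L}$ step weight times the positive Poisson rate of trajectories through $0$). These two events can be realised on disjoint parts of the Poisson trajectory process (a dedicated ``tube trajectory'', together with the absence of any other trajectory hitting the wall), which yields the geometric joint lower bound. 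The second estimate is that, conditionally on the trap event, the biased walk satisfies $\Eo_0[\,\text{exit time from }T(0)\,]\gtrsim \beta^L$: reversibility gives $\pi(T(0))\asymp \beta^L\pi(0)$ since the stationary weight concentrates at the tip while the unique exit (at the base) has weight $O(1)$, and the commute-time identity — or an explicit gambler's-ruin computation on $T(0)$ — produces the bound.

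With these in hand, fix $\eps>0$ and set $L=\lfloor c\log n\rfloor$. The expected number of $L$-trap bases in $\{x:\|x\|\leq n^\eps\}$ is at least $n^{\eps d+c\log\gamma}$, a positive power of $n$ provided $c<\eps d/|\log\gamma|$; the trap dwell time $\beta^L=n^{c\log\beta}$ exceeds the time budget $n$ provided $c\log\beta>1$. The two ranges for $c$ overlap precisely when $\beta>\gamma^{-1/(\eps d)}$, i.e.\ for $\beta$ large enough depending on $\eps$, $u$ and $d$. A final step showing that with high $\IP$-probability the walk enters at least one such trap before leaving $\{x:\|x\|\leq n^\eps\}$ gives a summable bound on $\IP[\|X_n\|\geq n^\eps]$, and Borel--Cantelli concludes.

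The main obstacle is precisely that last step, since the walk could in principle detour laterally within $\I^u$ around any individual trap. I would address this by arguing that each slab $\{x\cdot e_1=j\}$ with $0\leq j\leq n^\eps$ contains many trap bases with high $\bP_0$-probability — a second-moment computation using translation invariance of $\bP$ together with a positive-correlation (FKG-type) estimate for trap events at distinct bases — and that the biased walk, by virtue of the drift, must cross each slab it reaches at enough distinct vertices to land on one of them. If the straight-tube trap proves too rigid for this argument, enlarging the family to include modest thickenings or deflections of the tube, at the cost of slightly worse constants in $\gamma$ and in the exit-time bound but without affecting the overall structure of the argument, should make the coverage robust.
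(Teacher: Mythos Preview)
Your trap construction and the parameter balance match what the paper does in $d\geq 4$: a forward segment of length $\asymp\eps_2\ln n$ in $\I^u$ whose tip is surrounded by vacant sites, costing roughly $n^{-\gamma(\eps_1+\eps_2)}$ and yielding dwell time $n^{\eps_2\ln\beta}$. The genuine gap is the step you yourself flag as ``the main obstacle'': forcing the walk to enter one of these traps. Knowing that $\{\|x\|\leq n^\eps\}$ contains many trap bases does not force $X$ through any of them --- the trajectory of $X$ is determined by $\I^u$, and the drift only guarantees the walk crosses each slab $\{x\cdot e_1=j\}$, not that it samples any prescribed subset of it. Worse, the walk's path and the trap locations are \emph{correlated} through $\I^u$, so even a perfect second-moment count of traps says nothing about their intersection with the range of~$X$. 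Your proposed FKG patch is also unavailable: a trap event demands both $T(x_0)\subset\I^u$ (increasing) and the wall $\subset\V^u$ (decreasing), so it is neither increasing nor decreasing, and random interlacements do not enjoy a useful correlation inequality for such mixed events.

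The paper sidesteps this entirely by never asking the walk to find a trap. It carries over the cone-exit scheme from the three-dimensional argument: at each exit time $\tau_j$ from $\CC_M(jn^{1/3})$ one asks whether a trap lies \emph{directly ahead of} the exit point $X_{\tau_j}$, so that if the trap exists the walk is already at its mouth. Since the environment inside the cone has been revealed by the trajectory up to $\tau_j$, one needs a \emph{conditional} lower bound on the trap probability given $\I^u_{\CC_M(jn^{1/3})}$; this is exactly what the conditional decoupling of Proposition~\ref{p_main3} provides (and the $s^{d-2}$ there makes it only stronger for $d\geq 4$). With the trap planted at the walk's current location, the argument reduces to $\sim n^{2/3}$ conditionally independent trials of success probability $n^{-\gamma_1}$, and one concludes via Lemma~\ref{lem_end} and Borel--Cantelli exactly as in the three-dimensional case.
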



Let us emphasize that, apart from the very last section, 
this paper is entirely devoted to the case $d=3$.

\section{Preliminary estimates}
\label{s_prelim}
We start by introducing some further notation.
Given a set~$V$ of vertices of $\Z^d$ we denote by $\abs{V}$ its cardinality.
We define the inner boundary of~$V$ as
\[
\partial V= \big\{x \in V : y\notin  V,~x\sim y\big\}.
\]

For any $x\in \Z^3$ and $L \geq 1$, we define 
the ball centered in~$x$ and with radius~$L$ as
\[
B_x(L)=\bigl\{z\in \Z^3 : \|z-x\| < L \bigr\}.
\]

The positive constants 
(not depending on~$n$)
are denoted by $\gamma,\gamma',\gamma_1,\gamma_2,\gamma_3$ etc.

\subsection{Connectedness and exit probabilities}
\label{s_connectedness}

\subsubsection{Exit probabilities of large cones}
For $L_{1,2,3}\geq 0$ let us denote
\[
\cyl (-L_1,L_2,L_3)=\big \{z\in \Z^3,\ -L_1\leq z\cdot e_1 
\leq L_2,\abs{z\cdot e_i}\leq L_3 \text{ for $i\in\{2,3\}$}\big\},
\]
and 
\[
\partial^+ \cyl (-L_1,L_2,L_3)=\big\{z\in \Z^3,\ z\cdot e_1 = L_2,
\abs{z\cdot e_i}\leq L_3 \text{ for $i\in\{2,3\}$}\big\}.
\]
For any $x\in \Z^3$, we write 
$\cyl _x(-L_1,L_2,L_3)=\{y\in \Z^3,\ y=x+z 
\text{ with } z\in \cyl (-L_1,L_2,L_3)\}$ and similarly 
 for $\partial^+ \cyl _x(-L_1,L_2,L_3)$. 

\begin{lemma}
\label{lem_cerny}
For any $\alpha \in (0,1]$ there exists $\delta>0$ 
such that for any fixed~$u>0$
and all large enough~$n$
\begin{align}
\lefteqn{
\bP_0\big[0\text{ is connected to } 
\partial^+  \cyl (-n^{\alpha},n,
 n^{\alpha})  \text{ in~$\I^u\cap 
\cyl (-n^{\alpha},n, n^{\alpha})$} \big] 
}
\hphantom{*****************************}\nonumber\\
&
\geq 1-\exp(-\gamma n^\delta).
\label{connect_L21}
\end{align}
\end{lemma}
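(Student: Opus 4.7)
The plan is to tile the long thin cylinder by a chain of overlapping cubes of side comparable to $n^{\alpha}$ and to stitch together crossings of $\I^u$ inside each cube using a local uniqueness/connectivity estimate for random interlacements in three dimensions. More precisely, I would choose $L=\lfloor n^{\alpha}/4\rfloor$ and cover $\cyl(-n^{\alpha},n,n^{\alpha})$ by a sequence of translated boxes $B_1,\ldots,B_K\subset\cyl$ of side $4L$, with $K\asymp n^{1-\alpha}$, arranged so that consecutive boxes share an overlap containing a sub-cube of side $L$, with $B_1$ centered near $0$ and $B_K$ meeting $\partial^{+}\cyl(-n^{\alpha},n,n^{\alpha})$.

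The key input is the ``local uniqueness'' property available for random interlacements on $\Z^3$: inside a box $B$ of side $\ell$, with $\bP$-probability at least $1-\exp(-c\,\ell^{\delta'})$ for some $\delta'=\delta'(u)>0$, the trace $\I^u\cap B$ contains a distinguished connected component $\CC^u(B)$ which intersects every sub-cube of side $\ell/4$ contained in $B$, and any two connected components of $\I^u\cap B$ of diameter at least $\ell/4$ coincide. (This is the kind of statement established via Sznitman's decoupling, developed by Teixeira and by Drewitz--R\'ath--Sapozhnikov; the stretched-exponential bound is precisely what the decoupling inequalities deliver in $d=3$.) Applying this estimate to each $B_i$ and taking a union bound over $K\asymp n^{1-\alpha}$ boxes gives an overall failure probability at most $K\exp(-c\,n^{\alpha\delta'})\leq\exp(-\gamma n^{\delta})$ for some $0<\delta<\alpha\delta'$, as required.

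On the good event, for each $i$ the overlap $B_i\cap B_{i+1}$ contains a sub-cube of side $L$, which the giant components $\CC^u(B_i)$ and $\CC^u(B_{i+1})$ both meet. The diameter of each intersection is at least $L$, so uniqueness forces them to merge into a single connected subset of $\I^u\cap(B_i\cup B_{i+1})$. Iterating through $i=1,\ldots,K-1$ produces a connected subset of $\I^u$ inside $\cyl(-n^{\alpha},n,n^{\alpha})$ that meets both a neighborhood of~$0$ in $B_1$ and $\partial^{+}\cyl(-n^{\alpha},n,n^{\alpha})$ via $B_K$.

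It remains to attach the origin and a point of $\partial^{+}\cyl$ to this ``giant path''. Conditional on $0\in\I^u$, the probability that $0$ fails to be connected to $\CC^u(B_1)$ inside $B_1$ decays like $\exp(-c\,n^{\alpha\delta''})$ by the same local-uniqueness/connectivity estimate applied to the box around~$0$, and analogously for a vertex of $\partial^{+}\cyl$ inside $B_K$; these contributions are absorbed in the stretched-exponential bound. The main obstacle I foresee is not the geometric chaining itself but a clean invocation of the local uniqueness estimate with a stretched-exponential tail $\exp(-c\,\ell^{\delta'})$ valid for every $u>0$: the long-range correlations of $\I^u$ prevent naive independence arguments, so one must rely on the decoupling machinery, but once the right black box from the interlacements literature is cited, the union bound over $\asymp n^{1-\alpha}$ boxes closes the estimate.
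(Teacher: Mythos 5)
Your chaining-of-boxes argument is correct in spirit but takes a genuinely different route from the paper. The paper works directly along the axis: it lists the points $z_0=0,z_1=k_1e_1,z_2=k_2e_1,\dots$ of $\I^u$ on the ray $\{ke_1:k\ge 0\}$, bounds the gaps $k_{m+1}-k_m$ via the capacity of a segment (order $h/\ln h$) together with \eqref{eq_vacant>3}, and then invokes Theorem~1.3 of~\cite{CP} to get a stretched-exponential bound on the graph distance between consecutive $z_m$'s; if every $\rho_u(z_m,z_{m+1})\le n^{\alpha}/3$, the concatenated geodesics stay in the cylinder and reach $\partial^+\cyl$. This buys a very short proof whose only black box is the chemical-distance estimate, and the attachment of the origin is automatic since $z_0=0$. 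Your renormalization scheme is more generic and would adapt to other directions or geometries, but it needs the strictly stronger input of a quantified \emph{local uniqueness} statement for $\I^u$ (uniqueness of large clusters in a box, not just short chemical distances), for which you would have to cite the verification of the local-uniqueness condition for random interlacements (as in the references behind~\cite{PRS15}) rather than~\cite{CP} alone.

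Three points in your sketch need care. First, the chemical-distance/local-uniqueness estimates guarantee connection of two large clusters of $B$ only inside an \emph{enlarged} box $CB$; since your boxes have side comparable to the cylinder's transversal half-width $n^{\alpha}$, the connecting paths could leave $\cyl(-n^{\alpha},n,n^{\alpha})$ sideways. You must shrink the boxes to side $\eps n^{\alpha}$ (this is exactly why the paper demands $\rho_u(z_m,z_{m+1})\le n^{\alpha}/3$ while the gaps are only $\eps s$). Second, attaching the origin is not immediate from uniqueness of large clusters: you need to know that the cluster of $0$ in $\I^u\cap B_1$ is itself large, which follows from the observation that $0$ lies on a bi-infinite trajectory and hence its cluster reaches $\partial B_1$. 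Third, ``the giant component of $B_K$ meets a sub-cube near the face'' does not yet produce a point of $\partial^+\cyl$; one should let the chain overshoot the plane $\{z\cdot e_1=n\}$ and note that a nearest-neighbour path crossing that plane, stopped at its first visit to it, stays in the cylinder. All three are fixable, so I regard this as a valid alternative proof once the correct black box is cited with a genuinely stretched-exponential (and not merely super-polynomial) error term.
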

\begin{proof}
Let $\rho_u(x,y)$ be the graph distance (in~$\I^u$) between $x,y\in\I^u$,
and let $z_0=0, z_1=k_1 e_1, z=k_2 e_1,\ldots $ be the sites of~$\I^u$
lying on the ray $\{ke_1: k\in\Z_+\}$
(where, naturally, $0<k_1<k_2<\ldots$). 
It holds that the three-dimensional capacity of 
a ``segment'' $\{k e_1: k\in [0,h]\}$ is of order~$\frac{h}{\ln h}$
(cf.\ e.g.\ Proposition~2.4.5 of~\cite{Law91}),
so, for fixed~$u$ we obtain from~\eqref{eq_vacant>3} that
\begin{equation}
\label{dist_conseq}
 \bP_0[k_{m+1}-k_m> h] \leq \exp\Big(-\gamma'\frac{h}{\ln h}\Big).
\end{equation}
Write, with small enough~$\eps>0$
\begin{align}
\bP_0[\rho_u(z_m, z_{m+1})> s]
 &\leq
\bP_0[k_{m+1}-k_m>\eps s] 
+ \bP_0[\text{there exists }
y\in B_{k_m e_1}(\eps s) \nonumber\\
&\phantom{*****************}
\text{ such that }\rho_u(k_m e_1,y)>s]
\nonumber\\ 
 &\leq \exp\Big(-\gamma'\eps\frac{s}{\ln s}\Big)
  + \gamma'' \exp(-\gamma_1 s^\delta),
 \label{graph_dist} 
\end{align}
where we used~\eqref{dist_conseq} to bound the first term
and Theorem~1.3 of~\cite{CP} to bound the second one.

Then, observe that
the event in~\eqref{connect_L21} contains the event
\[
 \Big\{\rho_u(z_m, z_{m+1})\leq \frac{n^\alpha}{3} 
\text{ for all }m=0,\ldots,n\Big\}.
\]
The claim now follows from~\eqref{graph_dist} and the union bound.
\end{proof}

Define the cone (see Figure~\ref{CM})
\begin{equation}\label{def_cone}
\CC_M(n) =\big\{x\in \Z^3,\ \abs{x\cdot e_1}\leq n, 
\abs{x\cdot e_i}\leq M(n-x\cdot e_1) \text{ for }i\in 
\{2,3\}\big\},
\end{equation}
and its ``positive'' and ``negative'' boundaries
\begin{align*}
\partial^- \CC_M(n) &=\big\{x\in \Z^3,\ x\cdot e_1 =- n, 
\abs{x\cdot e_i}\leq M(n-x\cdot e_1) \text{ for }i\in \{2,3\}\big\}
\\
\partial^+ \CC_M(n) &= \partial \CC_M(n)\setminus\partial^- \CC_M(n).
\end{align*}

\begin{figure}
 \centering \includegraphics{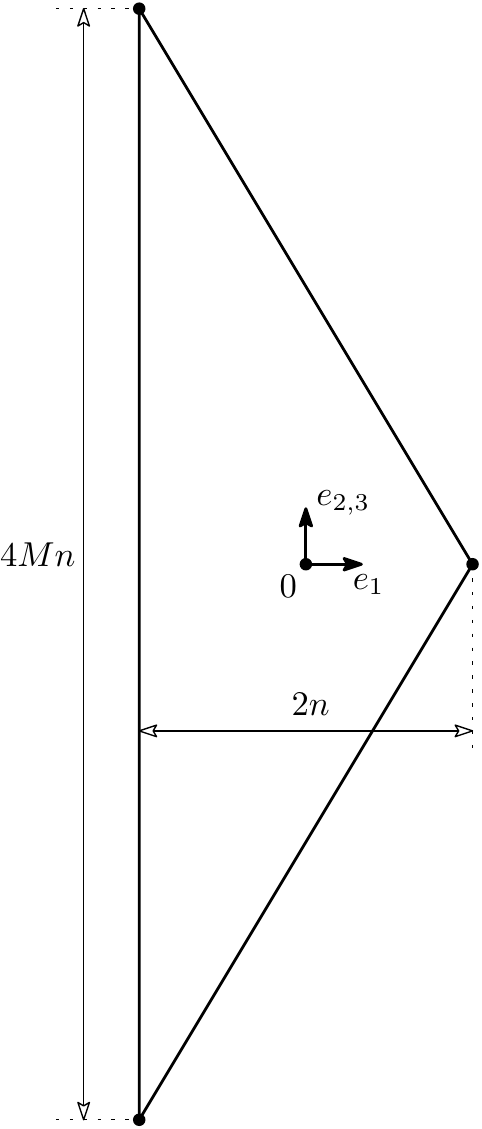} 
\caption{The cone $\CC_M(n)$}
\label{CM}
\end{figure}

First, we prove that it is very probable that the walker exits large
 cones on the positive side:
\begin{lemma}
\label{lem_backtrack}
For any $M<\infty$, we have 
\[
\IP[T_{\partial^- \CC_M(n)}<T_{\partial^+ \CC_M(n)}] 
\leq \gamma \exp(-\gamma n^{\delta}),
\] 
for some $\delta<1$.
\end{lemma}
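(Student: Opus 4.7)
My plan is an electrical-network argument. Fix a small $\alpha\in(0,1)$ and set $\CC'_M(n):=\CC_M(n)\cap\{x\cdot e_1\leq n-n^\alpha\}$. Since $\partial^-\CC'_M(n)=\partial^-\CC_M(n)$ and the new top face $\{x\cdot e_1=n-n^\alpha\}\cap\CC_M(n)$ is contained in $\partial^+\CC'_M(n)$, one has $\{T_{\partial^-\CC_M(n)}<T_{\partial^+\CC_M(n)}\}\subseteq\{T_{\partial^-\CC'_M(n)}<T_{\partial^+\CC'_M(n)}\}$, so it suffices to bound the right-hand probability. For the walk on $\I^u\cap\CC'_M(n)$ killed on exit, the standard excursion decomposition away from $0$ combined with Rayleigh monotonicity yields
\[
\Po_0\bigl[T_{\partial^-\CC'_M(n)}<T_{\partial^+\CC'_M(n)}\bigr] \leq \frac{\mathrm{cap}_\omega(0\leftrightarrow\partial^-\CC'_M(n))}{\mathrm{cap}_\omega(0\leftrightarrow\partial^+\CC'_M(n))},
\]
where $\mathrm{cap}_\omega$ denotes effective conductance in the finite random network $(\I^u\cap\CC'_M(n),c)$.

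For the numerator, Nash--Williams applied to the slab cutsets consisting of the edges of $\CC'_M(n)$ between $\{x\cdot e_1=-k+1\}$ and $\{x\cdot e_1=-k\}$, $k=1,\ldots,n$, each containing at most $\gamma M^2n^2$ edges of conductance $\beta^{-k+1}$, gives
\[
\mathrm{cap}_\omega\bigl(0\leftrightarrow\partial^-\CC'_M(n)\bigr)\leq \gamma_1 M^2n^2\beta^{-n}
\]
for \emph{every} realization of~$\omega$ (edges outside~$\I^u$ simply drop from the network, which can only strengthen the bound). For the denominator, Lemma~\ref{lem_cerny} produces, with $\bP_0$-probability at least $1-e^{-\gamma n^\delta}$, a path in $\I^u\cap\cyl(-n^\alpha,n,n^\alpha)$ connecting~$0$ to $\partial^+\cyl(-n^\alpha,n,n^\alpha)$; the estimates $\rho_u(z_m,z_{m+1})\leq n^\alpha/3$ used in its proof also show that such a path may be chosen of polynomial length $\leq n^{\gamma_2}$. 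Since $\cyl(-n^\alpha,n,n^\alpha)\cap\CC'_M(n)=\cyl(-n^\alpha,n-n^\alpha,n^\alpha)$ when $M\geq 1$ (the case $M<1$ is handled by running the same argument with cylinder of lateral width $Mn^\alpha$), the initial segment of this path until its first exit from $\CC'_M(n)$ is a path in $\I^u\cap\CC'_M(n)$ from~$0$ to $\partial^+\CC'_M(n)$ of length $\leq n^{\gamma_2}$, using only edges of first coordinate $\geq-n^\alpha$, hence conductance $\geq\beta^{-n^\alpha}$. Viewing this path as a series wire, Rayleigh monotonicity gives $\mathrm{cap}_\omega(0\leftrightarrow\partial^+\CC'_M(n))\geq n^{-\gamma_2}\beta^{-n^\alpha}$.

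Combining both estimates, on the good event,
\[
\Po_0\bigl[T_{\partial^-\CC_M(n)}<T_{\partial^+\CC_M(n)}\bigr]\leq \gamma_3 n^{\gamma_2+2}\beta^{-n+n^\alpha}\leq e^{-\gamma_4 n}
\]
for any fixed $\alpha<1$ and all~$n$ large, and averaging against~$\bP_0$ with the trivial bound on the complementary event produces the claimed $\gamma\exp(-\gamma n^\delta)$ estimate. The only step I expect to require genuine care is extracting the polynomial path-length bound from the proof of Lemma~\ref{lem_cerny} (it is implicit in the graph-distance estimates $\rho_u(z_m,z_{m+1})\leq n^\alpha/3$ but not in its statement); everything else is a routine combination of the cone geometry with the monotonicity properties of electric networks.
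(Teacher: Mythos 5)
Your overall strategy --- bounding the quenched escape probability by a ratio of effective conductances, lower-bounding the denominator by viewing the path supplied by Lemma~\ref{lem_cerny} as a series circuit, and upper-bounding the numerator by a cutset argument --- is essentially the paper's own proof (the paper merges vertices rather than invoking Nash--Williams, and keeps $C^{\omega}(0\leftrightarrow \partial \CC_M(n))$ in the denominator, but these are cosmetic differences). There is, however, one genuine error: the opening reduction. The inclusion $\{T_{\partial^-\CC_M(n)}<T_{\partial^+\CC_M(n)}\}\subseteq\{T_{\partial^-\CC'_M(n)}<T_{\partial^+\CC'_M(n)}\}$ is false, because the new top face $\{x\cdot e_1=n-n^\alpha\}\cap\CC_M(n)$ lies in the \emph{interior} of $\CC_M(n)$: a trajectory may first visit that face (so that $T_{\partial^+\CC'_M(n)}<T_{\partial^-\CC'_M(n)}$) and afterwards descend to $\partial^-\CC_M(n)$ without ever touching the lateral boundary of the cone, i.e., while still realizing the left-hand event. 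Enlarging the ``positive'' target set makes the event $\{T_{\partial^-}<T_{\partial^+}\}$ smaller, not larger, so the inequality between the two probabilities goes the wrong way. Fortunately the truncation is unnecessary: the path from Lemma~\ref{lem_cerny} stays in $\cyl(-n^\alpha,n,n^\alpha)$ and ends at level $n$, where the cone has shrunk to its apex, so it must leave $\CC_M(n)$; since it never goes below level $-n^\alpha>-n$, the last vertex of the path inside the cone belongs to $\partial^+\CC_M(n)$. Hence you can run your argument verbatim on the untruncated cone, with denominator $C^{\omega}(0\leftrightarrow\partial^+\CC_M(n))$ (or, as in the paper, $C^{\omega}(0\leftrightarrow\partial\CC_M(n))$, for which it is irrelevant through which part of the boundary the path exits).

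The step you flag as requiring ``genuine care'' --- a polynomial bound on the length of the connecting path --- is in fact immediate and needs nothing from the internals of Lemma~\ref{lem_cerny}: a connected subgraph of $\I^u\cap\cyl(-n^\alpha,n,n^\alpha)$ containing $0$ and a point of $\partial^+\cyl(-n^\alpha,n,n^\alpha)$ contains a \emph{simple} path between them, and a simple path is bounded in length by $|\cyl(-n^\alpha,n,n^\alpha)|\leq\gamma n^{3}$. This is exactly what the paper does. With these two repairs your proof is correct and coincides with the paper's.
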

\begin{proof}
Let us consider the event
\[
G_n :=\big\{0\text{ is connected in~$\I^u\cap 
\cyl (-n^{\alpha},n, n^{\alpha})$ to } 
\partial^+  \cyl (-n^{\alpha},n,
 n^{\alpha}) \big\}.
\]

For any environment $\omega$, we may 
use equation (4) in~\cite{BGP} (or also exercise~2.36 in~\cite{LP}) 
 to see that
\begin{equation}\label{bt_1}
P_0^{\omega}[T_{\partial^- \CC_M(n) }< T_{\partial^+ \CC_M(n) }] 
\leq \frac {C^{\omega}(0\leftrightarrow \partial^- \CC_M(n) )}{C^{\omega}(0\leftrightarrow \partial \CC_M(n))},
\end{equation}
where~$C^\omega$ stands for the effective conductance
in the (weighted) interlacement graph restricted on~$\CC_M(n)$.
In an environment $\omega\in G_n$, we know that there is 
a simple path from $0$ to $\partial^+  \cyl (-n^{\alpha},n,
 n^{\alpha})$ within $ \cyl (-n^{\alpha},n,
 n^{\alpha})$ and because of our definition of $\CC_M(n)$ 
this path has to cross $\partial \CC_M(n)$ before reaching 
$\partial^+  \cyl (-n^{\alpha},n,  n^{\alpha})$. 
This implies that for $\omega\in G_n$ there exists 
a path~$\mathcal{P}$, composed of $v_0=0,v_1,\ldots, v_{i_0}$ 
such that
\begin{enumerate}
\item $v_{i_0}\in \partial \CC_M(n)$,
\item $i_0\leq \abs{\cyl (-n^{\alpha},n,n^{\alpha}) }\leq \gamma n^3$,
\item $v_j\cdot e_1\geq -n^{\alpha}$ for any $j\leq i_0$.
\end{enumerate}

We recall that Rayleigh's monotonicity principle (see
Section~2.4 of~\cite{LP}) 
implies that closing edges in a graph decreases all effective 
conductances. Hence, we know that the effective conductance  
$C^{\omega}(0\leftrightarrow \partial \CC_M(n))$ can be
 lower bounded by the conductance of the path~$\mathcal{P}$ 
which is at least $cn^{-3}\beta^{-n^{\alpha}}$ 
(where~$\alpha<1$) since it is composed of at most~$\gamma n^3$ edges 
in series which have conductance at least $c\beta^{-n^{\alpha}}$. 
Hence, in an environment~$\omega$ belonging to the event 
appearing in Lemma~\ref{lem_cerny}, we have
\begin{equation}
\label{bt_2}
C^{\omega}(0\leftrightarrow \partial \CC_M(n))
  \geq cn^{-3}\beta^{-n^{\alpha}}.
\end{equation}
Rayleigh's monotonicity principle 
 also implies that merging vertices together increases 
effective conductances. Let us merge all vertices of 
$ \CC_M(n)\setminus \partial^- \CC_M(n)$ 
(which contains the origin) into~$\Delta_1$ 
and all vertices of $\partial^- \CC_M(n)$ into~$\Delta_2$;
we can use Rayleigh's monotonicity principle to see that 
$C^{\omega}(0\leftrightarrow \partial^- \CC_M(n) )
\leq C^{\omega}(\Delta_1\leftrightarrow \Delta_2)$. 
The latter can be upper bounded by seeing that~$\Delta_1$ 
and~$\Delta_2$ are linked by at most~$\gamma n^3$ edges of 
conductances at most~$\gamma \beta^{-n}$. Hence
\begin{equation}
\label{bt_3}
C^{\omega}(0\leftrightarrow \partial^- \CC_M(n) )
\leq \gamma  n^3 \beta^{-n}.
\end{equation}

Putting together~(\ref{bt_1}),~(\ref{bt_2}) and~(\ref{bt_3}) 
we see that, for any~$\omega$ belonging to the event appearing 
in Lemma~\ref{lem_cerny}, we have
\[
\Po_0[T_{\partial^- \CC_M(n) }
< T_{\partial^+ \CC_M(n) }] 
\leq \gamma n^{6}\beta^{-n+n^{\alpha}},
\]
for some $\alpha<1$. Hence, by using Lemma~\ref{lem_cerny}, 
we see that for some~$\delta<1$ we have
\begin{align*}
 \IP[T_{\partial^- \CC_M(n) }< T_{\partial^+ \CC_M(n) }]
&=\IP[G_n ^c] 
+\textbf{E}\big[G_n , \Po_0[T_{\partial^- \CC_M(n) }
< T_{\partial^+ \CC_M(n) }]\big] \\
 &\leq  \gamma' \exp(-\gamma n^{\delta}),
\end{align*}
which implies the result.
\end{proof}

Let us introduce
\begin{equation}
\label{def_Phi_n}
\Phi_n =\bigcap_{i=1}^{n^{2/3}} 
\big\{T_{\partial^+ \CC_M(in^{1/3})}
< T_{\partial^- \CC_M(in^{1/3})}\big\};
\end{equation}
we can then use the union bound and 
Lemma~\ref{lem_backtrack} to see that
\begin{equation}
\label{est_Phi_n}
\IP[\Phi_n ^c] \leq \gamma' \sum_{i=1}^{n^{2/3}}
\exp\big(-\gamma (in^{1/3})^{\delta}\big).
\end{equation}

\subsubsection{Proof of transience}

Let us define
\[
K(n)=\big\{x\in \Z^3,\ x\cdot e_1 >-n,\ |x\cdot e_i| 
\leq n +x\cdot e_1 \text{ for } i\in \{2,3\} \big\}.
\]

\begin{lemma}\label{last_lem}
There exists $\delta>0$ such that for any fixed~$u>0$
and for all~$n$ large enough
\begin{equation}
\label{cyl_connected}
\bP_0\big[0\text{ is connected to infinity in }
\I^u\cap K(n)\big] \geq 1-\exp(-\gamma n^{\delta}).
\end{equation}
\end{lemma}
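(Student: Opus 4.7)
The plan is to extend the ray-based argument from the proof of Lemma~\ref{lem_cerny} from a finite cylinder to the entire positive $e_1$-axis. The key geometric point is that this axis lies inside $K(n)$ and $K(n)$ has margin $\geq n + k$ at $e_1$-coordinate $k \geq 0$, so short $\I^u$-detours from the axis stay safely inside $K(n)$. An infinite path in $\I^u \cap K(n)$ starting at $0$ will be produced by concatenating $\I^u$-geodesics between consecutive interlacement hits $z_0 = 0, z_1 = k_1 e_1, z_2 = k_2 e_1, \ldots$ on the positive ray.

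Fix $\beta' \in (0, 1)$ small, and set $s_m := (n + k_m)^{\beta'}$. Consider the good event $\Omega_n^*$ that $\rho_u(z_m, z_{m+1}) \leq s_m$ for every $m \geq 0$. On $\Omega_n^*$, the $\I^u$-geodesic from $z_m$ to $z_{m+1}$ has graph length (hence Euclidean diameter) at most $s_m$, so it is contained in the ball $B_{z_m}(s_m)$; for $n$ large, $s_m < (n + k_m)/2$, so this ball lies inside $K(n)$. Concatenating these geodesics for $m = 0, 1, 2, \ldots$ yields the required infinite path.

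The remaining task is to bound $\bP_0[(\Omega_n^*)^c]$. For a given spatial location $j \in \Z_+$, estimate~\eqref{graph_dist} (together with the capacity bound~\eqref{dist_conseq} behind it) gives, after reparametrizing by $k_m = j$, a contribution of order $\exp(-\gamma (n + j)^{\beta'\delta})$ for some $\delta > 0$. Summing over $j$, with the sum split at $j = n$ and the tail bounded by a standard integral estimate, yields $\bP_0[(\Omega_n^*)^c] \leq \exp(-\gamma' n^{\delta'})$, as required.

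The main obstacle is precisely this uniformity in $m$: unlike in Lemma~\ref{lem_cerny}, where a finite union bound over $m \leq n$ sufficed, here we must control the infinite ray. Allowing the graph-distance budget $s_m$ to grow mildly with $k_m$ is essential, as it makes the tail of the above sum summable while respecting the geometric constraint $s_m < n + k_m$ needed to keep the paths in $K(n)$.
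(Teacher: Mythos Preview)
Your argument is correct and follows the same strategy as the paper: bound the $\I^u$-graph distances between consecutive interlacement points $z_m$ on the positive $e_1$-ray by a threshold that grows with position, check that the resulting geodesic balls stay inside $K(n)$, and take an infinite union bound via~\eqref{graph_dist}. The paper makes one simplification worth noting: it uses the deterministic threshold $\frac{n+m}{3}$ (linear in the \emph{index}~$m$, which suffices since $k_m\geq m$ forces $\frac{n+m}{3}\leq\frac{n+k_m}{2}$), so the union bound is a direct sum over $m\geq 0$ of~\eqref{graph_dist} with $s=\frac{n+m}{3}$, avoiding your sublinear threshold $(n+k_m)^{\beta'}$ in the random position~$k_m$ and the attendant reparametrization by spatial location.
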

\begin{proof}
This can be proved quite similarly to Lemma~\ref{lem_cerny}.
Using the same notation,
observe that
the event in~\eqref{cyl_connected} contains the event
\[
 \Big\{\rho_u(z_m, z_{m+1})\leq \frac{n+m}{3} 
\text{ for all }m\geq 0\Big\}.
\]
Again, the claim follows from~\eqref{graph_dist} 
and the union bound.
\end{proof}

Let us now prove Theorem~\ref{theorem_trans}.
\begin{proof}
Using Lemma~\ref{last_lem} and Borel-Cantelli we can show that
$\bP_0$-a.s.\ there exists~$N(\omega)$ such that~$0$ 
belongs to an infinite simple path lying in $\I^u\cap K(N(\omega))$.

For any $k \geq 0$, this path will contain at most~$\gamma k^2$ 
edges of conductances $\gamma \beta ^{k-N(\omega)+1}$. 
This means, using Rayleigh's monotonicity principle and the 
law of resistances in series 
(see Chapter~2 of~\cite{LP}), we can prove that 
\[
R^{\omega}(0\leftrightarrow \infty) \leq \gamma \sum_{k\geq 0}  k^2 \beta^{N(\omega)-k+1}  <\infty.
\]

This means that the resistance from~$0$ to~$\infty$ is finite, 
which means that the random walk is transient (see Chapter~2 of~\cite{LP}).
\end{proof}

\subsection{Traps}
\label{s_traps}
Let us remind the reader that we are working in three dimensions.

As usual, the method for proving that the biased random 
walk has zero speed is showing that it will encounter
a \emph{trap}, i.e., a part of the environment where 
the random walk will stay for a long time. 
For a biased random walk, this consists in looking for
 dead-ends in the direction~$e_1$ from, 
once the biased walk has entered such a dead-end, 
it will have to fight against the drift to exit the trap.

 From now on we assume that~$M$ is not too small, say, $M\geq 10$.
Let us introduce the ``quiver'' set 
\begin{equation}
\label{def_PT}
Q(x,M,n)= \partial\cyl_x\big(0, \lf M\ln n\rf+1, (\ln n)^{3/4}\big),
\end{equation} 
where $\lf\cdot\rf$ stands for the integer part. 
Also, for $x\in\Z^3$ we denote
 $\xe= \xe(x,M,n)=
x+ \lf\frac{3}{M} \ln n\rf e_1$ and 
$\xt =\xt (x,M,n)
=\xe+\lf M\ln n\rf e_1$. 
Observe that $\xe \in Q(\xe , M, n)$
and~$\xt$ is strictly inside $Q(\xe,M,n)$.

Our goal is to find a trapping structure for the walk. Let us denote 
$\TT (x,M,n)$ the event that there exists a 
trap at~$x$
(see Figure~\ref{f_trap}),
defined in the following way: let
\begin{align*}
 \TT^{(1)}(x,M,n) &= \big\{\text{$\mathcal{I}_u$ intersects
              $Q(\xe,M,n)$ only at~$\xe $}\big\},\\
 \TT^{(2)}(x,M,n) &= \big\{x+je_1\in\mathcal{I}_u, \text{ for all }
  0\leq j\leq \lf\textstyle\frac{3}{M} \ln n\rf\big\},\\
  \TT^{(3)}(x,M,n) &= \big\{\xt\in \mathcal{I}_u\big\},
\end{align*}
and we define 
$\TT (x,M,n)= \TT^{(1)}(x,M,n)\cap\TT^{(2)}(x,M,n)\cap\TT^{(3)}(x,M,n)$.

Not focusing, for now, on technicalities, the key part of the event 
$\TT (x,M,n)$ is that, not far away from~$x$ in the direction
 of the drift,
there is a structure in~$\mathcal{I}_u$ creating a dead end for the
biased random walk. 

\begin{figure}
 \centering \includegraphics[width=\textwidth]{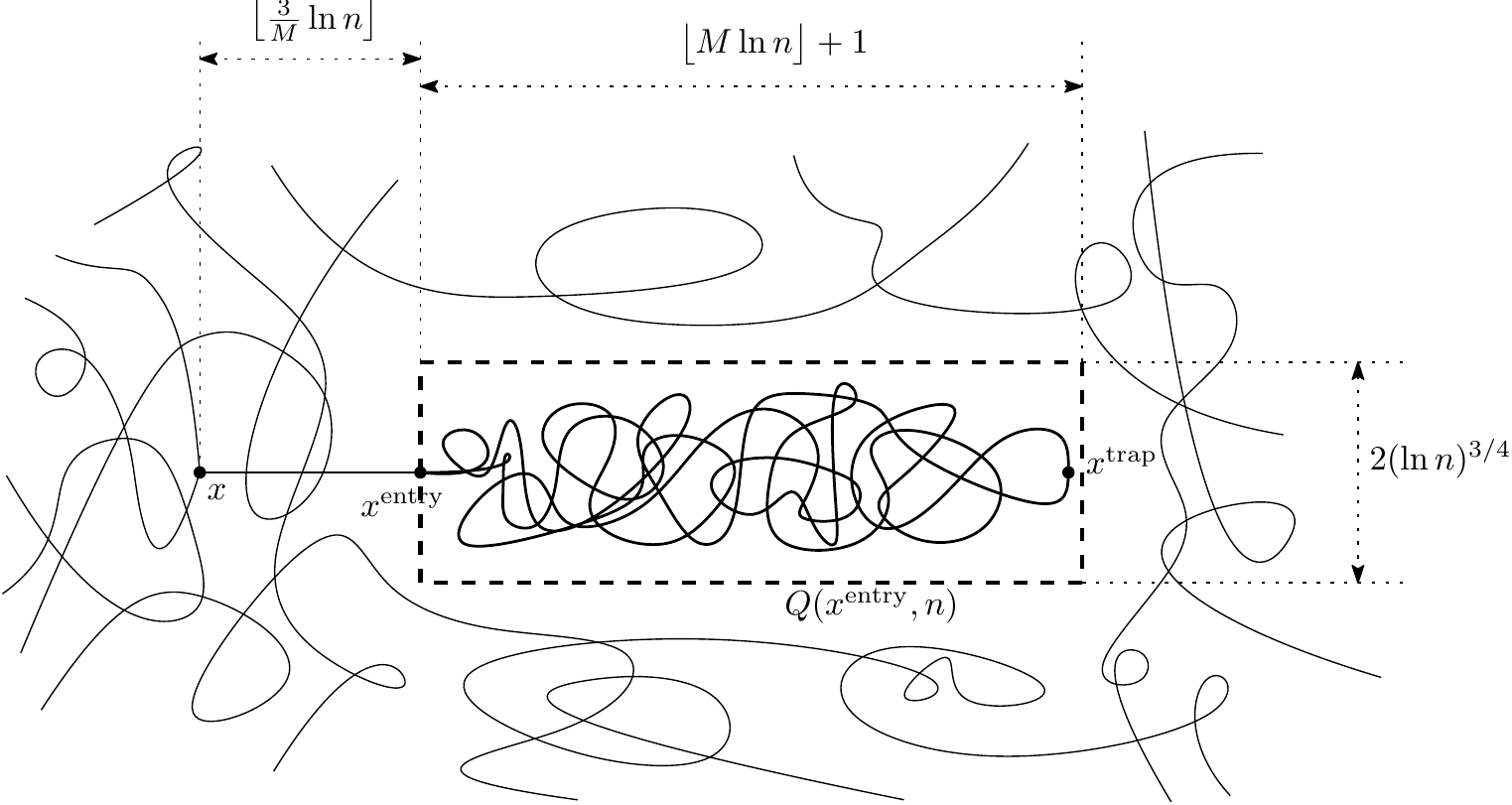} 
\caption{A trap for the random walk~$X$. The solid lines/curves
are the interlacements; the trajectory of the RWRE~$X$ is not shown
on the picture.}
\label{f_trap}
\end{figure}

Such a structure can appear if, for example
\begin{enumerate}
\item all the walk traces forming $\mathcal{I}_u$ except one avoid 
$Q(\xe,M,n)$;
\item simultaneously, the one remaining walk's trace 
has a behavior such that the two last conditions present
 in the definition of $\TT(x,M,n)$ are satisfied.
\end{enumerate}

These two types of events are the ones 
we are going to study in order to understand the likelihood 
of finding traps.

%
%

Recall that $\Psrw_x$ stands for the law of simple random
walk~$(S_n,n\geq 0)$ started from~$x$. 
Let $\sigma_1=\min\{k> T_{\xe }: S_k=\xt\}$
be the time of the first visit to~$\xt$ after
visiting~$\xe $,
$\sigma_2=\min\{k> T_{\xe }: S_k=\xe \}$
be the moment when~$\xe $ is visited for the second time,
and $\sigma_3=\min\{k> T_{\xe }: S_k\in Q(\xe ,M,n) \}$
be the moment when~$Q(\xe ,M,n)$ is first visited \emph{after}~$T_{\xe }$.
Define the event
\begin{align}
 E_{x,M,n} &= \big\{S_i=x+ie_1, i=1,\ldots,\lf\textstyle\frac 3M \ln n\rf,
 \sigma_1<\sigma_2
 = \sigma_3\ ,\nonumber\\
 & \qquad\qquad\qquad\qquad\qquad S_{\sigma_2+j}=\xe -je_1,
   j=1,\ldots,\lf\textstyle\frac 3M \ln n\rf\big\};
\label{df_loop}
\end{align}
that is, the trajectory makes a loop that first goes straight
from~$x$ to~$\xe $, then goes to~$\xt$
and returns to~$\xe $ strictly
inside~$Q(\xe ,M,n)$, and then returns straight to~$x$,
see Figure~\ref{f_trap}.

\begin{lemma}
\label{l_dig_a_trap}
There is a constant~$\gamma>0$ such that 
\begin{equation}
\label{first_walk}
\Psrw_{x}[E_{x,M,n}]\geq n^{-\gamma /M}
\end{equation}
for all large enough~$n$.
\end{lemma}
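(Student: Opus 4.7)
My plan is to decompose the event $E_{x,M,n}$ via the strong Markov property applied at $T_{\xe}$ and $\sigma_2$, writing $\Psrw_x[E_{x,M,n}]$ as the product of three factors: (i) the straight walk from $x$ to $\xe$, with probability exactly $(1/6)^{\lfloor 3\ln n/M\rfloor}\geq n^{-3\ln 6/M}$; (ii) a middle ``loop'' excursion starting and ending at $\xe$ that visits $\xt$ and avoids $Q(\xe,M,n)\setminus\{\xe\}$ throughout; and (iii) the straight return from $\xe$ to $x$, with the same lower bound. Factors (i) and (iii) together contribute at least $n^{-6\ln 6/M}$.

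For the middle factor, the first step out of $\xe$ must be to $\xe+e_1$: the lateral neighbors $\xe\pm e_i$ ($i=2,3$) lie in $Q\setminus\{\xe\}$ and would force $\sigma_3=1<\sigma_1$, while $\xe-e_1$ lies outside $\cyl_{\xe}(0,\lfloor M\ln n\rfloor+1,(\ln n)^{3/4})$ and any trajectory leaving through that side must re-enter through $Q$ before reaching the interior point $\xt$, again making $\sigma_3<\sigma_1$. Applying the strong Markov property again at $\sigma_1$, the middle factor becomes $\tfrac{1}{6}\,p_1\,p_2$ with
\[
p_1=\Psrw_{\xe+e_1}[T_{\xt}<T_Q], \qquad p_2=\Psrw_{\xt}[T_{\xe}<T_{Q\setminus\{\xe\}}].
\]

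Both $p_1,p_2$ are hitting probabilities for SRW in a thin cylinder of length $L\sim M\ln n$ and cross-section side $W\sim(\ln n)^{3/4}$. I would use the Green's function identity $\Psrw_u[T_v<T_{\partial D}]=G_D(u,v)/G_D(v,v)$, with $D$ equal to the cylinder interior for $p_1$ and to the cylinder interior together with $\xe$ for $p_2$; the diagonal $G_D(v,v)$ is $O(1)$ by comparison with $G_{\mathbb{Z}^3}(v,v)$. For the off-diagonal term I use the spectral decomposition of the killed walk: the principal eigenfunction $\phi_{1,1,1}$ takes values of order $1/L$ at both $\xe+e_1$ and $\xt$, has squared norm of order $LW^2$, and sits above a spectral gap of order $1/W^2$, giving a leading contribution of order $1/L^3$ to $G_D(\xe+e_1,\xt)$. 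To turn this into an honest lower bound despite possibly signed higher-mode contributions, I restrict the time integration $\sum_t\Psrw_u[S_t=v,\,T_{\partial D}>t]$ to $t\geq c\,W^2\log L$, past the second-mode relaxation time, where the positive principal contribution dominates the nonnegative transition probabilities. This yields $p_1,p_2\geq 1/\mathrm{polylog}(n)$, and combining with factors (i) and (iii) gives $\Psrw_x[E_{x,M,n}]\geq n^{-6\ln 6/M}/\mathrm{polylog}(n)\geq n^{-\gamma/M}$ for any $\gamma>6\ln 6$ and all $n$ large enough. The main technical obstacle is precisely this cylinder Green's function lower bound, since the naive spectral expansion has alternating-sign contributions; cutting off the time integral past the second-mode relaxation time is the cleanest route I see to extract the polylog positive principal contribution from the pointwise-nonnegative heat kernel.
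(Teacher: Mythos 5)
Your decomposition of $E_{x,M,n}$ is sound and parallels the paper's: the straight in/out segments contribute $6^{-2\lf 3M^{-1}\ln n\rf}$, the forced first step from $\xe$ to $\xe+e_1$ is justified exactly as you argue, and the reduction of the middle factor to $\tfrac16\,p_1p_2$ with $p_1=\Psrw_{\xe+e_1}[T_{Q}>T_{\xt}]$ and $p_2=\Psrw_{\xt}[T_{\xe}<T_{Q\setminus\{\xe\}}]$ (where $Q:=Q(\xe,M,n)$) is an exact use of the strong Markov property. The gap is in your estimate of $p_1,p_2$. The cylinder has length $L\sim M\ln n$ and half-width $W\sim(\ln n)^{3/4}$, so $L/W=M(\ln n)^{1/4}\to\infty$, and the crossing probability is genuinely subpolylogarithmic: for any $t_0$, $p_1\leq \Psrw[\text{transverse coordinates confined for }t_0\text{ steps}]+\Psrw[\text{longitudinal displacement reaches }L\text{ within }t_0\text{ steps}]\leq e^{-ct_0/W^2}+e^{-cL^2/t_0}$, and optimizing at $t_0=LW$ gives $p_1\leq e^{-cL/W}=\exp(-cM(\ln n)^{1/4})$, which is smaller than any power of $\ln n$. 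Hence the claimed bound $p_1,p_2\geq 1/\mathrm{polylog}(n)$ is false, and the principal spectral term of order $1/L^3$ vastly overestimates $G_D(\xe+e_1,\xt)$. Your proposed repair, cutting the time sum at $t\geq cW^2\log L$, does not work: at such times only the higher \emph{transverse} modes have relaxed. The modes with transverse index $(1,1)$ and longitudinal index $j\geq2$ have eigenvalues within $O(j^2/L^2)$ of the principal one and satisfy $\phi_j(u)\phi_j(v)\approx(-1)^{j+1}j^2/L^2$ at the two ends of the tube; their alternating sum reconstructs (via reflection) the one-dimensional Dirichlet heat kernel across $[0,L]$, which at time $t\ll L^2$ is of order $e^{-L^2/(2t)}$ and therefore cancels the principal term almost exactly. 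The principal mode dominates at these two points only for $t\gtrsim L^2\log L$, and there $\lambda_1^t\approx e^{-cL^2\log L/W^2}$ reinstates a factor $\exp(-cM^2(\ln n)^{1/2}\ln\ln n)$.

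This error does not sink the lemma, because any bound of the form $p_1\wedge p_2\geq\exp(-C_M(\ln n)^{1/2}\ln\ln n)$ is still $n^{-o(1)}$ and is absorbed into $n^{-\gamma/M}$; so your argument can be repaired by moving the cutoff to $t\sim L^2\log L$ and accepting the harmless subpolynomial loss. The paper sidesteps the spectral delicacy entirely: it splits the SRW into three independent one-dimensional coordinate walks via time changes, bounds the probability that the two transverse walks stay confined in a window of length $\tfrac14 M^2\ln^2 n$ below by $\exp(-\gamma M^2(\ln n)^{1/2})$, and handles the longitudinal crossing by gambler's ruin together with a conditional-duration (Chebyshev) estimate. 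Either route works once the quantitative target is set correctly at $n^{-o(1)}$ rather than $1/\mathrm{polylog}(n)$.
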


\begin{proof}
 Clearly, it holds that 
\begin{equation}
\label{go_straight}
 \Psrw_{x}\big[S_j=x+je_1, 
S_{\sigma_2+j}=\xe -je_1,
   j=1,\ldots,\lf\textstyle\frac 3M \ln n\rf\big]
= 6^{-2\lf \frac 3M \ln n\rf},
\end{equation}
so it remains to find a lower bound on the probability
that the trajectory behaves as it should inside~$Q(\xe ,M,n)$.

Define 
\begin{align*}
 \sigma'_1 &= \min\big\{k\geq 1: S_k\cdot e_1 = \xe\cdot e_1
    + \lf M\ln n\rf - \lf \textstyle\frac{1}{M}\ln n\rf\big\},\\
 \sigma'_2 &= \min\big\{k\geq 1: S_k\cdot e_1 = \xe\cdot e_1
    + \lf \textstyle\frac{1}{M}\ln n\rf\big\}.
\end{align*}
Then, by the (strong) Markov property it holds that
(recall that $\xe\in Q(\xe ,M,n)$)
\begin{align}
\lefteqn{ \Psrw_\xe\big[T_\xt<T_\xe = T_{Q(\xe ,M,n)}\big]}
\nonumber\\
 &\geq
\Psrw_\xe\big[\sigma'_1< T_{Q(\xe ,M,n)},(S_1-\xe)\cdot e_1=1\big]
\nonumber\\
 &\quad {}\times\Psrw_\xe\big[T_\xt-\sigma'_1
\leq 3\lf \textstyle\frac{1}{M}\ln n\rf < T_{Q(\xe ,M,n)}
 \mid (S_1-\xe)\cdot e_1=1, \nonumber\\
 &\phantom{**********************************}
\sigma'_1< T_{Q(\xe ,M,n)} \big]
\nonumber\\
& \quad {}\times \Psrw_\xt\big[\sigma'_2< T_{Q(\xe ,M,n)}\big]
\nonumber\\
& \quad {}\times
 \Psrw_\xt\big[T_\xe-\sigma'_2\leq 3\lf \textstyle\frac{1}{M}\ln n\rf
  < T_{Q(\xe ,M,n)}
 \mid \sigma'_2< T_{Q(\xe ,M,n)}\big]
\nonumber\\
&=: F_1 \times F_2 \times F_3 \times F_4.
\label{strongMarkov_decomp}
\end{align}
\begin{equation}
\label{F24}
 F_2 \wedge F_4 \geq 6^{-3\lf \textstyle\frac{1}{M}\ln n\rf}
\end{equation}
(just follow a fixed path of length at most 
$3\lf \textstyle\frac{1}{M}\ln n\rf$ that leads to~$\xe$ or~$\xt$).

In order to estimate the other two terms in~\eqref{strongMarkov_decomp},
denote $\tilde{S}_k^{(i)}=S_k\cdot e_i$, $i=1,2,3$ and
\[
S_k^{(i)}=\tilde{S}_{\theta_k^{(i)}}^{(i)} \text{ with } \theta_k^{(i)}=\inf\{j >\theta_{k-1}^{(i)}, \ \tilde{S}_j^{(i)}-\tilde{S}_{j-1}^{(i)} \neq 0\},
\]
initialized with $\theta_0^{(i)}=0$. In words,
$S_k^{(i)}$ records the successive steps of the SRW~$S$ in the $i$-th 
direction. Obviously, we have that $S^{(1)}$, $S^{(2)}$ and~$S^{(3)}$ 
are independent (the same is not true for $\tilde{S}^{(i)}$).

 Let $T^{(i)}$'s be the corresponding hitting times defined analogously to~\eqref{hitting_t}. We then write
\begin{align*}
\lefteqn{ \Psrw_0\big[T^{(1)}_{\lf M\ln n\rf - 
 \lf \frac{1}{M}\ln n\rf} <M^2 \ln^2 n,
    T^{(1)}_{\lf M\ln n\rf - 
 \lf \frac{1}{M}\ln n\rf} < T^{(1)}_0\big] }\\
& = \Psrw_0\big[T^{(1)}_{\lf M\ln n\rf - 
 \lf \frac{1}{M}\ln n\rf} <M^2 \ln^2 n
  \mid  T^{(1)}_{\lf M\ln n\rf - 
 \lf \frac{1}{M}\ln n\rf} < T^{(1)}_0\big] \\
&\qquad \times \Psrw_0\big[T^{(1)}_{\lf M\ln n\rf - 
 \lf \frac{1}{M}\ln n\rf} < T^{(1)}_0\big] .
\end{align*}
Clearly, the second term in the right-hand side of the above
display is bounded below by $1/\lf M\ln n\rf$.
As for the first term, observe that formula~(6)
of~\cite{Stern} implies that the conditional expectation
of the time the simple random walk starting at~$1$ hits~$a\geq 2$, given
that it hits~$a$ before hitting~$0$, is equal to $(a^2-1)/3$.
So, using Chebyshev's inequality for the probability
of the complementary event, we see that the first term
is bounded below by~$2/3$. 
Therefore, we obtain that
\begin{equation}
\label{one-dim_escape}
 \Psrw_0\big[T^{(1)}_{\lf M\ln n\rf - 
 \lf \frac{1}{M}\ln n\rf} <M^2 \ln^2 n,
    T^{(1)}_{\lf M\ln n\rf - 
 \lf \frac{1}{M}\ln n\rf} < T^{(1)}_0\big]
 \geq \frac{2}{3M\ln n},
\end{equation}
which loosely speaking means that the first component has 
probability at least $ \frac{2}{3M\ln n}$ to reach the 
right-hand side of the quiver in time a time less 
than $M^2 \ln^2 n$. Also, denote
\[
 \tau^{(i)}=\min\big\{k: |S_k^{(i)}|=\lf (\ln n)^{3/4} \rf\big\}
\]
 for $i=2,3$.
It holds 
that (see e.g.\ \S21 of Chapter~V of~\cite{Spitzer})
\begin{equation}
\label{conf_1dim}
 \Psrw\Big[ \tau^{(i)} \geq \frac 14 M^2\ln^2 n\Big] 
  \geq \exp\big(-\gamma''M^2(\ln n)^{1/2}\big),
\end{equation}
this means that the second and third coordinates have probability 
at least $\exp\big(-\gamma''M^2(\ln n)^{1/2}\big)$ to stay confined
 in the quiver for a time at least $\frac 14 M^2 \ln^2 n$.

The last remaining step is to notice that by the law of large
 numbers the time-changes $\theta^{(i)}$ are such that
\[
\Psrw\Big[\theta^{(1)}_{M^2 \ln^2 n}\leq \frac 14 M^2 \ln^2 n,\ 
 \theta^{(2)}_{\frac 14 M^2\ln^2 n}>  \frac 18 M^2 \ln^2 n,\ 
 \theta^{(3)}_{\frac 14 M^2\ln^2 n}>  \frac 18 M^2 \ln^2 n\Big] \geq \frac12,
\]
since asymptotically $1/3$ of the steps should be taken in any directions.

Since the time changes are independent of the walks $S^{(j)}$ we can use~\eqref{one-dim_escape} and~\eqref{conf_1dim} to obtain that 
\begin{equation}
\label{F13}
 F_1\wedge F_3 \geq \frac 12 \times \frac{2}{3M\ln n}
 \times \Big(\frac{\gamma'\exp\big(-\gamma''(\ln n)^{1/2}\big)}
 {M\ln n}\Big)^2,
\end{equation}
and this concludes the proof of Lemma~\ref{l_dig_a_trap}.
\end{proof}

\subsection{Finding traps in the interlacement set}
\label{s_res_RI}
First of all, we need to estimate the ``cost'' of having
a trap in some fixed place. 
\begin{lemma}
\label{l_capacity}
In three dimensions, we have
$\capa(Q(x,M,n))\leq \frac{\gamma M\ln n}{\ln\ln n}$.
\end{lemma}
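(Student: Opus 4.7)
The plan is to invoke the dual test-measure characterisation of capacity: if $\mu$ is a probability measure on $\Z^3$ and $c>0$ is a constant such that $\sum_z G(y,z)\mu(z) \geq c$ for every $y$ in a finite set $A$, then $\capa(A) \leq 1/c$. This follows in one line by summing the inequality against the normalised equilibrium measure $\overline{e}_A$ and using the symmetry of the Green function $G$ together with the identity $\sum_{z\in A} G(y,z)\overline{e}_A(z) = \Psrw_y[T_A<\infty]/\capa(A) \leq 1/\capa(A)$. Thus the whole task reduces to finding a convenient $\mu$ and verifying a uniform lower bound on its potential.

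Since $Q(x,M,n) \subset \cyl_x(0, L, R) =: \widetilde Q$ for $L = \lf M\ln n\rf + 1$ and $R = (\ln n)^{3/4}$, monotonicity of capacity reduces matters to bounding $\capa(\widetilde Q)$. I would choose $\mu$ to be the uniform probability measure on the axis $\{x+ke_1 : 0\leq k\leq L\}$. Any $y \in \widetilde Q$ has the form $y = x + ae_1 + w$ with $a \in [0,L]$ and $w \perp e_1$ of Euclidean norm $r \leq \sqrt{2}\,R$, so $\|y - (x+ke_1)\|^2 = (k-a)^2 + r^2$. The standard three-dimensional bound $G(u,v) \geq \gamma'/\|u-v\|$ for $\|u-v\|\geq 1$ (with the possible diagonal term $k=a$, $r=0$ handled trivially by $G(y,y)>0$) then gives
\[
 \sum_z G(y,z)\mu(z) \geq \frac{\gamma''}{L+1}\sum_{k=0}^{L} \frac{1}{1+\sqrt{(k-a)^2+r^2}}.
\]

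The key step is the elementary Riemann-sum estimate, uniform in $a \in [0,L]$ and $r \in [0,\sqrt{2}\,R]$,
\[
 \sum_{k=0}^{L} \frac{1}{1+\sqrt{(k-a)^2+r^2}} \geq \gamma_1 \ln(L/R),
\]
obtained by comparison with $\int_0^L \frac{dt}{1+\sqrt{(t-a)^2+r^2}}$. Because $L/R \asymp M(\ln n)^{1/4}$, this yields $\ln(L/R) \geq \gamma_2 \ln\ln n$ for $n$ large, so the potential satisfies $\sum_z G(y,z)\mu(z) \geq \gamma_3 \ln\ln n/(M\ln n)$ uniformly on $\widetilde Q$. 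The test-measure principle then gives $\capa(\widetilde Q) \leq \gamma M\ln n/\ln\ln n$, and the same bound holds \emph{a fortiori} for $\capa(Q(x,M,n))$.

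The only mildly delicate part of the argument is checking the $\ln(L/R)$ lower bound on the sum when $a$ lies close to $0$ or $L$, because then one of the two halves of the integration range is short; but the remaining half still spans $[1\vee r, L]$ and contributes $\asymp \ln(L/r) \geq \ln(L/R)$, so the estimate is robust. Apart from this endpoint bookkeeping and the standard 3D Green function asymptotics, nothing deep is required.
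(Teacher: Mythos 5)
Your argument is correct, and it rests on the same potential-theoretic duality as the paper's proof, but with a genuinely different test object and computation. The paper invokes the variational formula $\capa(Q)=\inf_{\phi\in\Sigma^\downarrow}\sum_{x\in Q}\phi(x)$ from Lemma~1.14 of \cite{DRS14} and exhibits the \emph{constant} test function on the two-dimensional shell $Q$ itself; the feasibility check amounts to lower-bounding $\sum_{y\in Q}\|x-y\|^{-1}$ by counting points of $Q$ in annuli around $x$ (order $k$ points at distance $k\leq(\ln n)^{3/4}$, order $(\ln n)^{3/4}$ points per unit of distance beyond that), which gives $\gamma(\ln n)^{3/4}\ln\ln n$, and the capacity bound follows upon multiplying by $|Q|\asymp M(\ln n)^{7/4}$. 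You instead pass to the solid cylinder by monotonicity and place the uniform \emph{measure} on its axis, reducing the potential estimate to a one-dimensional Riemann sum; the $\ln(L/R)\asymp\ln\ln n$ you extract is exactly the same logarithm the paper gets from the far annuli. Your route is the classical segment-capacity computation (compare the $h/\ln h$ bound for segments cited before \eqref{dist_conseq}) perturbed by the transverse offset $r\leq\sqrt{2}R$, it handles the endpoint cases correctly, and it yields the marginally stronger conclusion that the whole solid cylinder, not only its boundary shell, has capacity $O(M\ln n/\ln\ln n)$. In a written-up version you should state the test-measure principle with the entrance time $\min\{k\geq 0:S_k\in A\}$ rather than the return time $T_A$ of the paper (for $y\in A$ the last-exit identity gives $\sum_{z\in A}g(y,z)\overline{e}_A(z)=1/\capa(A)$ exactly), and record the standard two-sided estimate $g(u,v)\geq\gamma'/(1+\|u-v\|)$ in $d=3$; neither point affects the validity of the argument.
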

\begin{proof}
 Abbreviate, for now, $Q:=Q(x,M,n)$ and $m:=\ln n$. Also, let~$g(x,y)$ 
be the Green's function of the simple random walk;
it is well known that for all $x,y\in \Z^3$
\begin{equation}
\label{est_Green} 
 g(x,y) = g(y,x) = g(0,y-x) \leq \frac{\hg}{1+\|x-y\|}
\end{equation}
for some positive constant~$\hg$. Let us define the set of functions
\[
 \Sigma^\downarrow = \Big\{\phi\in \R^{\Z^3} : 
    \sum_{y\in Q}g(x,y)\phi(y)\geq 1 \text{ for all }x\in Q\Big\}.
\]
Then it holds that 
\begin{equation}
\label{variational_capa}
  \capa(Q) = \inf_{\phi\in\Sigma^\downarrow}\sum_{x\in Q} \phi(x),
\end{equation}
see Lemma~1.14 of~\cite{DRS14}.

Now, it is elementary to observe that for any~$x\in Q$
\[
 |\{y\in Q: \|x-y\|\in [k,k+1)\}| \geq 
    \begin{cases}
     \gamma_1 k, & \text{ for } k=1,\ldots, \lf m^{3/4}\rf\\
     \gamma_2 m^{3/4}, & \text{ for } k=\lf m^{3/4}\rf+1,
          \ldots, \textstyle\frac{1}{2}Mm.
    \end{cases}
\]
   From this, for any~$x\in Q$ we obtain
\begin{align*}
 \sum_{y\in Q} \frac{1}{\|x-y\|} &\geq \sum_{k=1}^{\frac{1}{2}Mm}
\frac{|\{y: \|x-y\|\in [k,k+1)\}|}{k}\\
 &\geq \gamma_1 m^{3/4} + \gamma_2 \sum_{k=\lf m^{3/4}\rf+1}^{\frac{1}{2}Mm}
 \frac{m^{3/4}}{k}\\
&\geq \gamma_3 m^{3/4} \ln m,
\end{align*}
so (recall~\eqref{est_Green}) a function~$\phi$ 
that equals $\frac{\gamma_4}{m^{3/4}\ln m}$ 
on~$Q$ for large enough~$\gamma_4$, belongs to~$\Sigma^\downarrow$
for all~$n$ large enough. The claim of
 Lemma~\ref{l_capacity} now follows from~\eqref{variational_capa}
 since~$|Q|$ is of order $Mm^{7/4}$.
\end{proof}

One technical difficulty in proving
that we regularly encounter traps
 is that we want to take into account 
the information obtained from the past trajectory of the walk; 
indeed, correlations in the interlacement set have infinite range.
That is, we need to be able to work with the \emph{conditional}
law of the interlacement set, given that inside some finite
set the interlacement configuration is (partially or even completely)
revealed. Next, we formulate a result from~\cite{AP} 
about the conditional decoupling for random interlacements.
We also observe that the unconditional decoupling from~\cite{SLT}
 is not enough in this situation.

With some abuse
of notation, we denote by~$\I^u_A$ 
 the interlacement configuration on level~$u$
restricted on~$A$, i.e., for $x\in A$ we write
 $\I^u_A(x)=1$ whenever $x\in\I^u$.
   
   
\begin{proposition}
\label{p_main3}
Let $u'>u>0$, and let $A_1=B_0(r)$, $A_2\subset\Z^3\setminus B_0(r+s)$;
assume that $\gamma_1s\leq r \leq \gamma_2 s$ 
for some fixed~$\gamma_{1,2}>0$. 
Then, there are positive constants~$\gamma,\gamma'$ depending only on 
dimension, and a (measurable)
set~$\mathcal{G}_{u'}\in\{0,1\}^{A_2}$ such that
\begin{equation*}
\bP \big[\I^u_{A_2}\in\mathcal{G}_{u'}\big]
\geq 1-\exp\big(-\gamma' u' s^{d-2}\big),
\end{equation*}
and for any increasing event~$E$ on the 
interlacements set intersected with~$A_1$, 
 we have
\begin{equation}
\label{e_conditionaldecoupling3}
\bP[E(\I^{u}_{A_1}) \mid \I^{u}_{A_2} ]
\1{\I^u_{A_2}\in\mathcal{G}_{u'}} 
\leq 
 \big(\bP[E( \I^{u + u'}_{A_1})]+\gamma
 \exp(-\gamma' u' s^{d-2})\big)\1{\I^u_{A_2}\in\mathcal{G}_{u'}}. 
\end{equation}
\end{proposition}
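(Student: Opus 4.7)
My plan is to exploit the Poisson point process construction of random interlacements. Recall that $\I^u$ arises as the image in $\Z^3$ of the union of trajectories of a Poisson point process $\eta$ on the space of doubly-infinite trajectories modulo time-shift, with intensity $u\nu$. The trajectories that hit $A_1$ can be partitioned into those avoiding $A_2$ and those visiting $A_2$ at some point; the former contribute to $\I^u_{A_1}$ in a way that is independent of $\I^u_{A_2}$, so the whole difficulty lies in controlling the "linking" trajectories.

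The central idea is a sprinkling-type coupling: on a suitable high-probability event $\mathcal{G}_{u'}$, I would like to show that the contribution of the linking trajectories to $\I^u_{A_1}$ (given the revealed trace $\I^u_{A_2}$) is stochastically dominated by an additional \emph{independent} Poisson process of trajectories with intensity $u'\nu$, which accounts for the bonus level $u'$ in $\I^{u+u'}_{A_1}$. To build such a coupling I would use the soft local time technique, applied to the excursions of the linking trajectories into $A_1$. The crucial analytic input is the following: for a simple random walk started at any point of $\partial B_0(r+s)$ and conditioned to hit $A_1$, the hitting distribution on $\partial A_1$ has Radon-Nikodym derivative bounded by an absolute constant (uniformly in the starting point) against the normalized harmonic measure $\overline{e}_{A_1}$. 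This follows from standard Green's function bounds in $\Z^3$ together with the assumption $\gamma_1 s \leq r\leq \gamma_2 s$, which ensures that the separating annulus is thick enough to produce uniform mixing.

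The set $\mathcal{G}_{u'}$ would be defined so that, on $\{\I^u_{A_2}\in\mathcal{G}_{u'}\}$, the total count of linking trajectories (and the number of excursions they make into $A_1$) does not exceed the typical Poisson scale by too much. Since the relevant intensity is of order $u\,\capa(A_1)\asymp u\,s^{d-2}$, a Chernoff bound yields exactly the claimed tail $\bP[\mathcal{G}_{u'}^c]\leq\exp(-\gamma' u' s^{d-2})$. Once inside $\mathcal{G}_{u'}$, the soft local time coupling matches the biased excursions of the linking trajectories to a collection of excursions distributed according to $\overline{e}_{A_1}$, with only a controlled "deficit" that is absorbed by the sprinkled level-$u'$ trajectories; applying the resulting stochastic domination to the increasing event $E$ gives~\eqref{e_conditionaldecoupling3}.

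The main technical obstacle is the soft local time coupling itself: one must simultaneously (i) bound the number of excursions that the coupling fails to match, (ii) show that the failure set is measurable with respect to $\I^u_{A_2}$ (so that the good set $\mathcal{G}_{u'}$ really lives in $\{0,1\}^{A_2}$), and (iii) make the Radon-Nikodym comparison between re-entry distributions on $A_1$ and $\overline{e}_{A_1}$ uniform over all admissible traces on $A_2$. The use of an \emph{increasing} event $E$ (and Rayleigh-type monotonicity for such events with respect to adding trajectories) is exactly what allows the "extra" sprinkled trajectories at level $u'$ to serve as a one-sided stochastic upper bound.
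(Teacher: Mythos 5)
The paper does not actually prove this proposition: its ``proof'' is the single line that the statement is an immediate corollary of Theorem~2.2 of \cite{AP}, and the authors explicitly remark that the unconditional decoupling of \cite{SLT} would not suffice here. Your sketch correctly identifies the architecture of the proof given in \cite{AP}: splitting off the trajectories that meet $A_1$ but avoid $A_2$, coupling the excursions into $A_1$ of the remaining ``linking'' trajectories, via soft local times, against an independent sprinkled process of intensity $u'\nu$, using a Harnack-type comparison of entrance measures on $A_1$ (valid because $r\asymp s$), and letting the monotonicity of the increasing event~$E$ absorb the coupling deficit. The Poisson large-deviation heuristic for $\bP[\I^u_{A_2}\notin\mathcal{G}_{u'}]$ is also the right one: since $u'>u$, the deviation $u'\capa(A_1)\asymp u's^{d-2}$ sits in the linear-exponent regime.

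As a standalone proof, however, the proposal has a genuine gap, and it is precisely the step you defer as the ``main technical obstacle''. The conditioning in \eqref{e_conditionaldecoupling3} is on the $\{0,1\}$-valued trace $\I^u_{A_2}$, not on the underlying trajectories. Your decomposition into ``avoiding'' and ``linking'' trajectories is not measurable with respect to that trace: knowing which sites of $A_2$ are occupied reveals neither how many trajectories produced them, nor how those trajectories are grouped into excursions, nor where they enter $A_1$. Hence the conditional law of the linking trajectories' excursions into $A_1$ given $\I^u_{A_2}$ is a nontrivial mixture, and the entire content of Theorem~2.2 of \cite{AP} is to control this mixture uniformly over configurations in $\mathcal{G}_{u'}$ while keeping $\mathcal{G}_{u'}$ a genuine subset of $\{0,1\}^{A_2}$. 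Until the coupling is constructed at that level (rather than at the level of the Poisson process of trajectories, where the problem is comparatively easy), your items (i)--(iii) are not finishing touches but the theorem itself; so either carry them out in full or, as the authors do, invoke \cite{AP}.
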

\begin{proof}
This is an immediate corollary of Theorem~2.2 of~\cite{AP}.
\end{proof}

The above decoupling result implies the following:
\begin{lemma}
\label{l_find_quiver}
For any~$M$ there exists a constant $g_M>0$ and 
a set $\GG(x,M,n)$ of good environments
on $\CC_M(n)$ such that 
$\bP_0[\GG(x,M,n)]\geq 1-n^{-12}$ and, for $\I^u_{\CC_M(n)}\in \GG(x,M,n)$
\begin{equation}
\label{eq_find_quiver}
 \bP_0\big[Q(\xe,M,n)\subset \V^u\mid \I^u_{\CC_M(n)}\big]
   \geq \exp\Big(-\frac{g_M\ln n}{\ln\ln n}\Big),
\end{equation}
for any $x\in\partial^+\CC_M(n)$
\end{lemma}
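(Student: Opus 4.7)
The plan is to combine the unconditional vacancy formula~\eqref{eq_vacant>3} and the capacity estimate of Lemma~\ref{l_capacity} with the conditional decoupling of Proposition~\ref{p_main3}. Unconditionally, \eqref{eq_vacant>3} and Lemma~\ref{l_capacity} already yield
\[
\bP\bigl[Q(\xe,M,n)\subset\V^{u+u'}\bigr]
\geq\exp\Bigl(-\frac{\gamma(u+u')M\ln n}{\ln\ln n}\Bigr)
\]
for any $u'\geq 0$, so the whole task reduces to transferring this unconditional lower bound to the conditional probability $\bP_0[\,\cdot\mid\I^u_{\CC_M(n)}\,]$ at a cost negligible compared to the main term, on a good event whose complement has probability at most $n^{-12}$.

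The first step is to check that the geometry fits the hypothesis of Proposition~\ref{p_main3}. For any $x\in\partial^+\CC_M(n)$, the shift $\xe=x+\lf(3/M)\ln n\rf e_1$ leaves $\CC_M(n)$ with a buffer of order $\ln n$: over this longitudinal shift, the admissible transverse radius $M(n-\cdot\,e_1)$ of the cone drops by $3\ln n$, against which the transverse width $(\ln n)^{3/4}$ of the quiver is negligible. Hence $Q(\xe,M,n)$ fits inside a Euclidean ball $A_1=B_{y_x}(r)$ of radius $r=\Theta(M\ln n)$, separated from $A_2:=\CC_M(n)$ by a gap $s=\Theta(\ln n)$, with $r/s=O(M)$ bounded (so $\gamma_1 s\leq r\leq\gamma_2 s$ holds with constants depending on $M$). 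Picking $u'=u'(M)$ large enough that $\gamma\exp(-\gamma' u's^{d-2})=\gamma n^{-\Theta(u')}\leq n^{-12}$, I take $\GG(x,M,n)$ to be the good set $\mathcal{G}_{u'}$ provided by Proposition~\ref{p_main3}; the bound $\bP_0[\GG(x,M,n)]\geq 1-n^{-12}$ then follows after absorbing the harmless constant $\bP[0\in\I^u]^{-1}$. Since $\{Q(\xe,M,n)\cap\I^u\neq\emptyset\}$ is an increasing event on $A_1$, Proposition~\ref{p_main3} applied to it and then passed to complements gives, on $\GG(x,M,n)$,
\[
\bP\bigl[Q(\xe,M,n)\subset\V^u\bigm|\I^u_{\CC_M(n)}\bigr]
\geq\bP\bigl[Q(\xe,M,n)\subset\V^{u+u'}\bigr]-\gamma n^{-\Theta(u')},
\]
and combining with the unconditional bound above (whose main term is $\exp(-\Theta(\ln n/\ln\ln n))$, much larger than the polynomial error) yields~\eqref{eq_find_quiver} for any $g_M>2\gamma(u+u')M$ and all $n$ large enough.

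The main technical point is purely geometric: one has to check, uniformly in $x\in\partial^+\CC_M(n)$ (including when $x$ is close to the tip of the cone, where the cross section pinches off), that the quiver $Q(\xe,M,n)$ fits inside a single Euclidean ball whose radius and gap to $\CC_M(n)$ are of comparable order. The $(3/M)\ln n$ shift in the definition of $\xe$ is calibrated precisely so that this gap is uniformly of order $\ln n$, which is what allows a constant (in $n$) choice of $u'$ in the decoupling and thereby a bound of the form $\exp(-g_M\ln n/\ln\ln n)$ rather than something weaker.
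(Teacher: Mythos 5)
Your proposal is correct and follows essentially the same route as the paper: insert $Q(\xe,M,n)$ into a ball of radius of order $M\ln n$ separated from $\CC_M(n)$ by a gap of order $\ln n$, apply Proposition~\ref{p_main3} to the increasing event $\{Q(\xe,M,n)\cap\I^u\neq\emptyset\}$ with $u'$ chosen large enough that the decoupling error is at most $n^{-12}$, and conclude via~\eqref{eq_vacant>3} and Lemma~\ref{l_capacity}, the main term $\exp(-g_M\ln n/\ln\ln n)$ dominating the polynomial error. Your explicit attention to the uniformity of the gap over $x\in\partial^+\CC_M(n)$ is a welcome elaboration of a point the paper states without detail.
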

\begin{proof}
 Indeed, let us first note that 
one can insert $Q(\xe,M,n)$ inside a ball of radius $M\ln n$
in such a way that the distance between this ball and~$\CC_M(n)$
would be at least $\frac{2}{M}\ln n$. 

Then, use Proposition~\ref{p_main3} with the increasing
event $\{Q(\xe,M,n)\cap \I^u \neq \emptyset\}$, $r=M \ln n$ 
and $s=\frac 2M \ln n$.

Observe that one can choose a large enough~$u'$ in such a way that
the probability of the event $\big(\GG(x,M,n)\big)^\complement$
would be bounded above by any negative power of~$n$. 

An application of~\eqref{eq_vacant>3} together with Lemma~\ref{l_capacity} yields that  for $\I^u_{\CC_M(n)}\in \GG(x,M,n)$
\[
 \bP_0\big[Q(\xe,M,n)\cap \I^u \neq \emptyset \mid \I^u_{\CC_M(n)}\big]
   \leq 1-\exp\Big(-\frac{g_M\ln n}{\ln\ln n}\Big) + Cn^{-12},
   \]
which finishes the proof. 
\end{proof}
   
The second important part for constructing a trap is to find
 an interlacement that actually creates the trap 
inside the cylinder avoided by all the other walks. 
This is our aim for now. We need a result about
``adding a loop to an existing configuration'', which
we now describe. Let~$A$ be a finite subset
of~$\Z^d$, $d\geq 3$ (we formulate this result for general~$d$
since it may be of independent interest). 
Fix any~$x_0\in A$ and let 
$x_0=y_0\sim y_1\sim\cdots \sim y_m=x_0$ be a nearest-neighbour
path that begins and ends in~$x_0$ and such that 
$y_k\in A\setminus\partial A$ for $k=0,\ldots,m$.
In the result below we need a finer control of the 
random interlacements: let $\LL^u(x)$ be the \emph{local time}
at site~$x$ at level~$u$, that is, the sum of local (occupation)
times in~$x$ of all trajectories at level~$u$.
We denote by $\eta\in\{0,1,2,\ldots\}^A$ a generic
configuration of local times
 on~$A$, and by~$\ell$ the configuration
``generated'' by the above loop, i.e., 
$\ell(x)=\sum_{k=1}^m \1{y_k=x}$
Write $(\eta+\ell) (x):= \eta(x)+\ell(x)$, 
and denote by $\LL^u_A(x)$ the local time configuration
on $A\subset \Z^d$.
\begin{lemma}
\label{l_loop}
 For any~$\eta$ such that $\eta(x_0)=1$ we have
\begin{equation}
\label{eq_loop}
 \bP[\LL^u_A = \eta+\ell] \geq (2d)^{-m}\bP[\LL^u_A = \eta]
\end{equation}
(recall that $x_0$ is the initial vertex of the loop, 
and~$m$ is the number of steps in the loop).
\end{lemma}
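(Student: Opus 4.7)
The plan is to build an explicit injective ``loop-insertion'' map $\Phi$ on trajectory configurations that sends the event $\{\LL^u_A = \eta\}$ into $\{\LL^u_A = \eta + \ell\}$ with a probability cost of exactly $(2d)^{-m}$. Using the constructive description of random interlacements restricted to $A$, I view a realization as an ordered tuple $C = (N, w_1, \ldots, w_N)$, where $N \sim \mathrm{Poisson}(u\,\capa(A))$ and, given $N$, the trajectories $w_i$ are i.i.d.\ SRW paths started on $\partial A$ under $\overline{e}_A$. The hypothesis $\eta(x_0) = 1$ forces exactly one $w_i$ to visit $x_0$, at a unique time $t$; note that $x_0 \in A\setminus\partial A$ by the hypothesis on $y_0$, so this visit occurs strictly after the trajectory's entry into $A$.

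Define $\Phi(C)$ by replacing $w_i$ with $\tilde w_i$ obtained by splicing the path $y_1, \ldots, y_m$ immediately after time $t$: $\tilde w_i(s) = w_i(s)$ for $s \leq t$, $\tilde w_i(t+k) = y_k$ for $1 \leq k \leq m$, and $\tilde w_i(t+m+s) = w_i(t+s)$ for $s \geq 1$. This is a valid nearest-neighbour path because $y_m = x_0 = w_i(t)$. Since every $y_k \in A \setminus \partial A$, the insertion preserves the entry point of $w_i$, keeps the inserted loop inside $A$, and adds exactly the profile $\ell$ to the local time on $A$; thus $\LL^u_A(\Phi(C)) = \eta + \ell$. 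For injectivity, observe that in $\Phi(C)$ the trajectory $\tilde w_i$ is still the unique one visiting $x_0$ (the others never do, since $\eta(x_0) = 1$); by locating its first visit to $x_0$ and deleting the subsequent $m$ steps, which by construction are the prescribed loop, one recovers $C$.

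It remains to compare probabilities. Since $\Phi$ alters only one component of the tuple and preserves $N$, the starting position of $w_i$, and all the other trajectories, the entire probability cost is borne by the $m$-step insertion inside $w_i$. By the Markov property of SRW, each of the $m$ prescribed steps carries a factor $(2d)^{-1}$, so combining this with the injectivity of $\Phi$ and the inclusion $\Phi(\{\LL^u_A = \eta\}) \subseteq \{\LL^u_A = \eta + \ell\}$ gives
\[
\bP[\LL^u_A = \eta + \ell] \;\geq\; \bP\bigl[\Phi(\{\LL^u_A = \eta\})\bigr] \;=\; (2d)^{-m}\,\bP[\LL^u_A = \eta].
\]
The main obstacle is essentially the measure-theoretic bookkeeping needed to apply the Markov property cleanly inside the Poissonized ensemble of trajectories (for instance, by first truncating to a finite time window in which each configuration carries a genuine probability mass, and then letting the truncation time go to infinity); the combinatorial factor $(2d)^{-m}$ itself is then immediate from the independence of the inserted SRW steps.
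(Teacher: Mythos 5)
Your proof is correct and follows essentially the same strategy as the paper's: an injective insertion of the prescribed loop at the first visit to $x_0$ of the unique visiting trajectory, with multiplicative cost $(2d)^{-m}$ coming from the $m$ forced nearest-neighbour steps inside $A\setminus\partial A$. The only difference is presentational: the paper sidesteps the measure-theoretic bookkeeping you flag by working with the finite \emph{traces} left on $A$, to which it assigns explicit discrete weights, rather than with full trajectories, but the combinatorial core is identical.
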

\begin{proof}
 A \emph{trace} on~$A$ is a finite sequence
$v=(v_0,\ldots,v_s)$ of vertices of~$A$ such that
either $v_{i-1}\sim v_i$ or $v_{i-1},v_i\in\partial A$
for all $i=1,\ldots, s$, and also $v_0,v_s\in\partial A$. 
 For a trace~$v$, define its weight as
\[
 p_v = \prod_{j=1}^s \Psrw_{v_{j-1}}[T_A<\infty,S_{T_A}=v_j]
    \times \Psrw_{v_s}[T_A=\infty];
\]
observe that if $v_{j-1}\in A\setminus\partial A$ then
$\Psrw_{v_{j-1}}[T_A<\infty,S_{T_A}=v_j]$ simply equals $(2d)^{-1}$,
and $\Psrw_{v_{j-1}}[T_A<\infty,S_{T_A}=v_j]\geq (2d)^{-1}$
in case $v_{i-1}\sim v_i$ (the inequality is strict if
both $v_{i-1}$ and~$v_i$ are on the boundary).
For a finite sequence of traces $\bv=(v^{(1)},\ldots,v^{(|\bv|)})$
we define its \emph{total weight} by
\[
 \mathsf{P}[\bv] = e^{-u\capa(A)}\frac{(u\capa(A))^{|\bv|}}{|\bv|!}
  \prod_{k=1}^{|\bv|} \frac{e_A(v_0^{(k)})}{\capa(A)}p_{v^{(k)}}
\]
(in fact, one may cancel $(\capa(A))^{|\bv|}$ in the above formula,
but we prefer to write it this way to make it clearer that the above
equals the probability that $v^{(k)}$'s are the traces left on~$A$
by the trajectories of random interlacements 
ordered with respect to the $u$-coordinate). Write also 
\[
L_{\bv}(x) = \sum_{k,j}\1{v_j^{(k)}=x},
\]
so that $L_{\bv}$ is the (total) local time
of the traces of~$\bv$, and observe that
\begin{equation}
\label{sum_bv}
 \bP[\LL^u_A = \eta] = \sum_{\bv: L_\bv=\eta} \mathsf{P}[\bv].
\end{equation}
Next, for~$\bv$ such that $L_\bv(x_0)\geq 1$ let us define
\begin{align*}
 \bk(\bv) &= \min\{k: \text{ there exists }j
\text{ such that }v_j^k=x_0\}\\
\intertext{and}
\bj(\bv) &= \min\{j: v_j^{(\bk(\bv))}=x_0\}.
\end{align*}
Also, for such~$\bv$ define a sequence of traces
\[
{\hat \bv} = (v^{(1)},\ldots,v^{(\bk(\bv)-1)}, {\hat v}^{(\bk(\bv))},
v^{(\bk(\bv)+1)},\ldots, v^{(|\bv|)}),
\]
where
\[
 {\hat v}^{(\bk(\bv))}_j = 
     \begin{cases}
      v^{(\bk(\bv))}_j, & \text{ if }j\leq \bj(\bv),\\
      y_i, &    \text{ if }j= \bj(\bv)+i,\quad 0<i\leq m,\\
      v^{(\bk(\bv))}_{j-m}, & \text{ if }j> \bj(\bv)+m;
     \end{cases}
\]
in words, $\hat\bv$ is obtained from~$\bv$ by inserting the loop
at the first possible location.
Now, by construction, it holds that 
$p_{{\hat v}^{(\bk(\bv))}} = (2d)^{-m}p_{v^{(\bk(\bv))}}$
(since the whole path lies in $A\setminus\partial A$),
and so $\mathsf{P}[{\hat \bv}]=(2d)^{-m}\mathsf{P}[\bv]$.
Now, $L_\bv=\eta$ implies $L_{\hat\bv}=\eta+\ell$
so, using~\eqref{sum_bv} and the fact that $\bv\mapsto{\hat\bv}$
is an injection
\[
\bP[\LL^u_A = \eta+\ell] 
\geq \sum_{\bv: L_\bv=\eta}\mathsf{P}[{\hat \bv}]
=(2d)^{-m}\bP[\LL^u_A = \eta],
\]
which concludes the proof of Lemma~\ref{l_loop}.
\end{proof}

%

Let~$A=\CC_M(n+3M \ln n)$ so that $Q(\xe,M,n)\subset A$
for all $x\in \partial^+\CC_M(n)$, and, for 
a fixed $\xi_0\in\{0,1\}^{\CC_M(n)}$ define
\begin{align*}
 \WW_1^{\xi_0} &= \big\{\eta\in \{0,1,2,\ldots\}^A:  \LL^u(x)=0
 \text{ for all } x\in Q(\xe,M,n), \\
& \qquad \qquad \qquad
\1{\LL^u(y)\geq 1}=\1{\xi_0(y)=1} \text{ for all }
y\in \I^u_{\CC_M(n)}\big\},\\
\intertext{and}
 \WW_2^{\xi_0} &= \big\{\eta\in \{0,1,2,\ldots\}^A: \TT (x,M,n)
 \text{ occurs on } \eta, \\
& \qquad \qquad \qquad
\1{\LL^u(y)\geq 1}=\1{\xi_0(y)=1} \text{ for all }
y\in \I^u_{\CC_M(n)}\big\}.
\end{align*}
Next, let~$\Phi$ be the set of local times~$\ell$ of loops
belonging to the event~$E_{x,M,n}$, recall Lemma~\ref{l_dig_a_trap}.
Observe that, if $\eta\in\WW_1^{\xi_0}$ and $\ell\in\Phi$, then
$\eta+\ell\in\WW_2^{\xi_0}$, and also if $\eta_1+\ell_1=\eta_2+\ell_2$
for $\eta_{1,2}\in\WW_1^{\xi_0}$ and $\ell_{1,2}\in\Phi$,
then $\eta_1=\eta_2$ and $\ell_1=\ell_2$.
With these observations, we write for any~$\xi_0$
such that $\xi_0(0)>0$ and $\xi_0(x)>0$
\begin{align*}
 \bP\big[\TT (x,M,n), \I^u_{\CC_M(n)}=\xi_0\big]
 & = \sum_{\eta\in\WW_2^{\xi_0}}\bP[\LL^u_A = \eta]\\
&\geq \sum_{\eta'\in\WW_1^{\xi_0}} \sum_{\ell\in \Phi}
 \IP[\LL^u_A = \eta'+\ell]\\
&\geq \sum_{\eta'\in\WW_1^{\xi_0}}\IP[\LL^u_A = \eta']
 \sum_{\ell\in \Phi} (2d)^{-|\ell|}\\
&\geq n^{-\gamma/M}
\IP\big[Q(\xe,M,n) \text{ is vacant}, \I^u_{\CC_M(n)}=\xi_0\big]\\
&\geq n^{-\gamma/M} 
\exp\Big(-\frac{g_M\ln n}{\ln\ln n}\Big)
\IP\big[\I^u_{\CC_M(n)}=\xi_0\big],
\end{align*}
where the last inequality follows from Lemma~\ref{l_find_quiver}
and the second-to-last one from  Lemma~\ref{l_dig_a_trap}.
This implies that
\begin{equation}
\label{eq_find_trap}
 \bP_0\big[\TT (x,M,n)\mid \I^u_{\CC_M(n)}, x\in \I^u_{\CC_M(n)}\big]
   \geq n^{-\gamma/M},
\end{equation}
for any $x\in\partial^+\CC_M(n)$.

\section{Proofs of the main theorems}
\label{s_proofs_main}

\subsection{The biased random walk on the interlacement set in 
three dimensions has sub-polynomial speed}
\label{s_proof_subpol}

In this section we prove Theorem~\ref{theorem_3d}.

Consider a sequence of cones $\CC_M(jn^{1/3})$, $j=1,\ldots,n^{2/3}$,
and let $\tau_j=T_{\partial \CC_M(jn^{1/3})}$,
see Figure~\ref{f_trapping}. 
Recall the definition of the ``good'' environment from 
Lemma~\ref{l_find_quiver}, and define 
a decreasing sequence of events
\begin{equation}
\label{def_hat_GG}
\hG_k = \bigcap_{j=1}^{k}
\bigcap_{x\in \partial^+ \CC_M(j n^{1/3})} \GG(x,M,jn^{1/3})
\end{equation}
for $k\leq n^{2/3}$; let also $\hG:=\hG_{n^{2/3}}$.
Observe that Lemma~\ref{l_find_quiver} implies that
\begin{equation}
\label{prob_GG}
 \bP[\hG] \geq 1- \sum_{k=1}^{n^{2/3}}M^2
 (jn^{1/3})^2\times (jn^{1/3})^{-12}
   \geq 1-C M^2 n^{-3} .
\end{equation}

Next, let us define
\[
 \zeta = \begin{cases}
          \infty, & \text{ on }\hG,\\
          j, & \text{ on }\hG_j\setminus \hG_{j+1},
            \text{ if } j<n^{2/3},\\
             0,  & \text{ on }\hG_1^\complement,
         \end{cases}
\]
and set formally $\tau_0=0, \tau_\infty=\infty$.
We introduce another process~$\hX$ in the following
way: for $k\geq 0$
\[
 \hX_k = \begin{cases}
          X_k, & \text{ for }k\leq \tau_\zeta,\\
           X_{\tau_\zeta}, & \text{ for }k > \tau_\zeta,
         \end{cases}
\]
i.e., it is equal to the old process~$X$ until
the process stays in the ``good'' cone  $\CC_M(\zeta n^{1/3})$,
and then is stopped.
We define also $\hat{\tau}_j={\widehat T}_{\partial \CC_M(jn^{1/3})}$,
where ${\widehat T}$'s are the hitting times for~$\hX$.
Then, let us define a sequence of events
\begin{equation}
\label{def_hH}
 \hH_j = \begin{cases}
           \Omega, &\text{on }\hat{\tau}_j=\infty 
           \text{ or when there exists }
               k\leq j\\
               & \qquad \text{ such that } 
           \hX_{\hat{\tau}_j}\in\partial^-\CC_M(kn^{1/3}),\\
             \{\hat{\tau}_{j+1}-\hat{\tau}_{j} 
              > n^{\frac{1}{3}M\ln \beta}\},&\text{otherwise}.
         \end{cases}
\end{equation}
Let $\widehat{\mathcal{F}}_{\hat{\tau}_j}$ be the sigma-algebra
generated by $\hX_0,\ldots,\hX_{\hat{\tau}_j}$.

We start by showing that when exiting a cone $\CC_M(jn^{1/3})$,
conditionally on any type of past 
information which was likely to occur, we have a decent chance of 
spending a lot of time in $\CC_M((j+1)n^{1/3})$:
\begin{lemma}
\label{lem_deeptrap}
Fix any $M<\infty $ and $\beta >1$.  
 We have for all $j=1,\ldots,n^{2/3}$
\[
\IP\big[\hH_j \mid \widehat{\mathcal{F}}_{\hat{\tau}_j}\big]
    \geq n^{-2\gamma /M},
\]
 for all~$n$ large enough.
\end{lemma}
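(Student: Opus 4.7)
The plan is to analyze $\hH_j$ conditionally on $\widehat{\mathcal{F}}_{\hat\tau_j}$ and, on the only non-trivial part of the event, to construct a trap near the exit point of $\CC_M(jn^{1/3})$ that detains the walk for more than $n^{(1/3)M\ln\beta}$ steps. I set $n':=jn^{1/3}$ throughout, so that $n^{1/3}\leq n'\leq n$.

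The first step is the reduction to the non-trivial case. By~\eqref{def_hH}, $\hH_j=\Omega$ whenever $\hat\tau_j=\infty$ or $\hX_{\hat\tau_j}\in\partial^-\CC_M(kn^{1/3})$ for some $k\leq j$, so only the complementary event needs analysis. On this complement $\hat\tau_j<\infty$, the walk has not yet been frozen (whence $\zeta\geq j$ and $\hG_j$ holds), and $x:=\hX_{\hat\tau_j}\in\partial^+\CC_M(n')\cap\I^u$. Since the walk has explored only inside $\CC_M(n')$ and since $Q(\xe,M,n')$ is disjoint from $\CC_M(n')$ for $n$ large, the event $\TT(x,M,n')$ depends on $\I^u$ only through its restriction outside $\CC_M(n')$, and the tower property combined with~\eqref{eq_find_trap} at scale $n'$ (valid because $\hG_j\subset\GG(x,M,n')$) yields
\[
\bP_0\bigl[\TT(x,M,n')\,\bigm|\,\widehat{\mathcal{F}}_{\hat\tau_j}\bigr]\ \geq\ (n')^{-\gamma/M}\ \geq\ n^{-\gamma/M},
\]
where the last step uses $n'\leq n$.

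The second step is, on $\TT(x,M,n')$, to prove the quenched lower bound $\Po_x[\hat\tau_{j+1}-\hat\tau_j>n^{(1/3)M\ln\beta}]\geq n^{-\gamma_1/M}$ via three sub-arguments. \emph{(a) Tunnel.} By $\TT^{(2)}$ the sites $x,x+e_1,\ldots,\xe$ belong to $\I^u$. At each interior tunnel site the forward-edge conductance is exactly $\beta$ times every other conductance at that site, so regardless of lateral interlacements the quenched probability of stepping forward is at least $\beta/(\beta+5)$; concatenating over the $\lfloor 3\ln n'/M\rfloor$ tunnel sites gives a probability $\geq n^{-\gamma_2/M}$ to walk straight to $\xe$. \emph{(b) Entering the cylinder.} By $\TT^{(1)}$ the walls of $\cyl_{\xe}(\cdots)$ are vacant except at $\xe$; since $\xt\in\I^u$ lies strictly inside, any interlacement trajectory through $\xt$ must enter through $\xe$, forcing $\xe+e_1\in\I^u$. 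From $\xe$ the quenched forward probability is $\beta/(1+\beta)$, and from $\xe+e_1$ the inward drift gives a constant-order probability of reaching $\xt$ before backtracking. \emph{(c) Detention.} Once at $\xt$, the conductance gap $\sum_z c(\xt,z)/\sum_z c(\xe,z)\geq\beta^{\lfloor M\ln n'\rfloor}\geq n^{(1/3)M\ln\beta}$ combined with the commute-time identity gives $\Eo_\xt[T_\xe]\gtrsim n^{(1/3)M\ln\beta}$, and hence a Markov-type argument on the geometric-like structure of the return-time distribution yields $\Po_\xt[T_\xe>n^{(1/3)M\ln\beta}]\geq 1/2$ for $n$ large enough. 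Combining (a)--(c) with the trap-existence bound and absorbing constants into $\gamma$ delivers the $n^{-2\gamma/M}$ bound claimed in the lemma.

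The main obstacle is step~(c): $\TT(x,M,n')$ pins down the configuration on $Q(\xe,M,n')\cup\{\xt\}$ only, leaving the internal cluster inside the cylinder connecting $\xe$ to $\xt$ random and \emph{a priori} of complicated geometry. The lower bound on $\Eo_\xt[T_\xe]$ must therefore rely only on deterministic consequences of $\TT$ — the conductance concentration at $\xt$ and an upper bound on $R_{\mathrm{eff}}(\xt,\xe)$ derived from a single known trajectory, for instance the loop realizing the trap in the proof of~\eqref{eq_find_trap}. The polynomial corrections coming from the cluster size $\lesssim M(\ln n')^{5/2}$ are negligible against the exponential conductance ratio, which is what ultimately provides the required lower bound.
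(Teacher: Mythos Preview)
Your overall architecture matches the paper's: reduce to the case $\hX_{\hat\tau_j}=x\in\partial^+\CC_M(jn^{1/3})$, produce a trap via~\eqref{eq_find_trap}, and on $\TT(x,M,jn^{1/3})$ obtain a quenched delay bound. Your handling of the annealed conditioning by the tower property (insert $\sigma(\I^u_{\CC_M(n')})\vee\sigma(\text{walk randomness})$ between $\widehat{\mathcal F}_{\hat\tau_j}$ and $\TT$) is equivalent to the paper's explicit ratio manipulation in~\eqref{annealed_conditional}; the paper just writes it out as $\bE_0(\Po_x[\hH_j]\1{\TT}\Gamma_\omega)/\bE_0\Gamma_\omega$ and applies the ``general fact'' about conditional expectations.

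There are two genuine gaps in the quenched part. In step~(b), the claim that from $\xe+e_1$ the drift gives a \emph{constant-order} probability of reaching~$\xt$ before~$\xe$ is unjustified: the interior cluster inside the cylinder may wind back and forth in the $e_1$-direction, so the bias along the graph need not help. The paper instead uses the effective-conductance formula $\mathrm P^G_{\xe}[T_{\xt}<T_{\xe}]=C^G(\xe\leftrightarrow\xt)/\pi^G(\xe)$ together with Rayleigh monotonicity on a single path of length $\leq\gamma(M\ln n)^3$, obtaining only a polylogarithmic lower bound $\gamma(M\ln n)^{-3}$; this is still absorbed into $n^{-\gamma/M}$.

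In step~(c), the commute-time identity is the wrong tool: it gives $\Eo_{\xt}[T_{\xe}]+\Eo_{\xe}[T_{\xt}]=2c_G R_{\mathrm{eff}}(\xt,\xe)$, and both summands are of the same order here, so you cannot extract a lower bound on $\Eo_{\xt}[T_{\xe}]$ this way. The paper bypasses expected hitting times entirely and uses reversibility,
\[
\Po_{\xt}[T_{\xe}<T^+_{\xt}]=\frac{\pi^G(\xe)}{\pi^G(\xt)}\,\Po_{\xe}[T_{\xt}<T^+_{\xe}]\leq\frac{\pi^G(\xe)}{\pi^G(\xt)}\leq\gamma\beta^{-\frac{M}{3}\ln n},
\]
so the number of returns to~$\xt$ before escape is geometric with this parameter, giving $\Po_{\xt}[T_{\partial\CC_M((j+1)n^{1/3})}\geq\beta^{M\ln n}]\geq\eps$ directly. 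Your final paragraph in fact points at exactly this ingredient (``conductance concentration at~$\xt$''); replacing the commute-time appeal by the reversibility identity above closes the gap.
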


\begin{figure}
 \centering \includegraphics{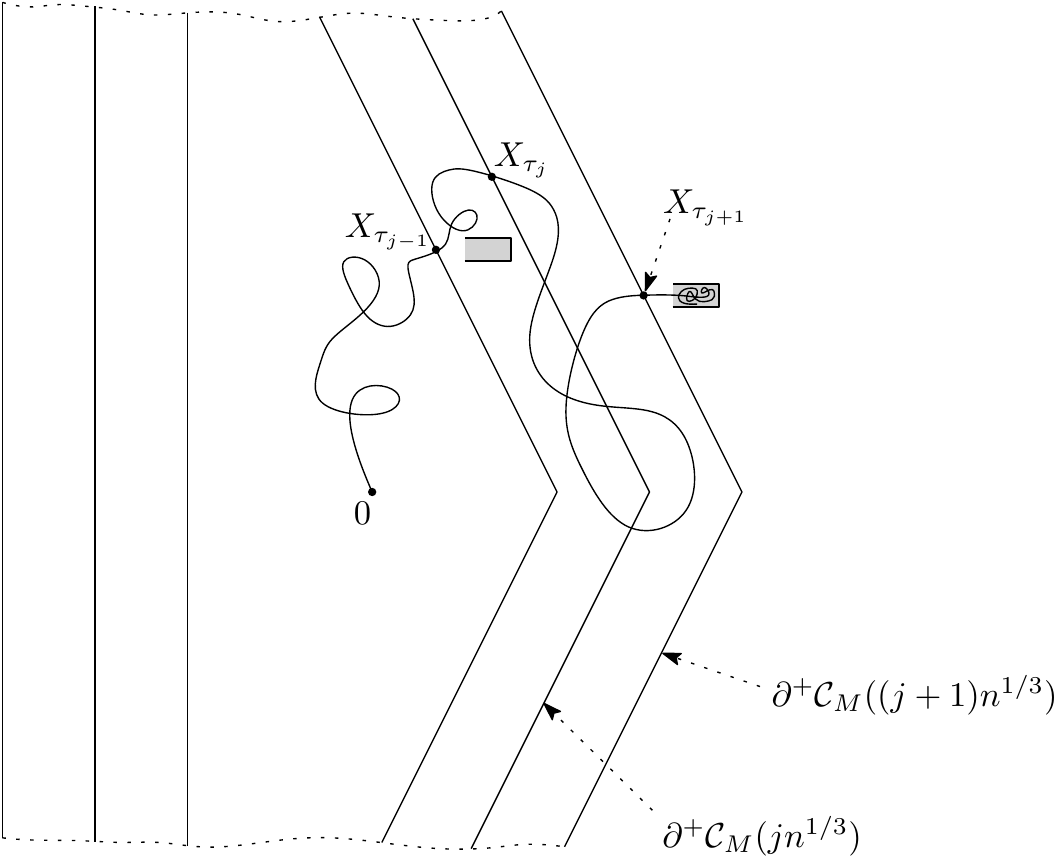} 
\caption{On the proof of Theorem~\ref{theorem_3d}.
There is a trap in front of~$X_{\tau_{j-1}}$, but the walk
manages to escape it; there is no trap in front of~$X_{\tau_j}$;
there is again a trap in front of~$X_{\tau_{j+1}}$
and the walk finally gets caught there.}
\label{f_trapping}
\end{figure}

\begin{proof}
Assume that $x \in \partial^+\CC_M(jn^{1/3})$.
In the following, we abbreviate
$\xe:=\xe(x,M,jn^{1/3})$ and $\xt:=\xt(x,M,jn^{1/3})$.
On the event $\TT(x,M,jn^{1/3})$, we know that 
\begin{enumerate}
\item $x+ke_1\in\mathcal{I}_u, \text{ for } 
      0\leq k\leq \lf \frac{3}{M}\ln n \rf$;
\item $\xe $ is $\mathcal{I}_u$-connected to 
$\xt$ inside $Q(\xe, M, jn^{1/3})$;
\item the connected component of $\xt$ in 
$\mathcal{I}_u \setminus \xe $ is finite.
\end{enumerate}

In particular, the first property above implies
\begin{equation}\label{deeptrap1}
\Po_x\big[X_{1}=x+e_1,\ldots, X_{\lf\frac{3}{M} \ln n\rf}
 =x+\lf{\textstyle\frac{3}{M}} \ln n\rf e_1\big] 
\geq \Bigl(\frac \beta{\beta +2d-1}\Bigr)^{\lf\frac 3M \ln n\rf}.
\end{equation}



Also on the event $\TT(x,M,jn^{1/3})$, 
by the second and third property 
above, we know that the connected component of $\xt$ 
in $\mathcal{I}_u\setminus \xe $ is finite and 
is adjacent to~$\xe $. We denote by~$G$ the finite 
network formed by this connected component and~$\xe$.
Let us also denote by~$\mathrm{P}^G$ the probability
for the walk restricted on~$G$.

 On $\TT(x,M,jn^{1/3})$ it is only possible to exit 
$Q(\xe, M, jn^{1/3})$ through~$\xe$;
also, note that we defined traps in such a way that from~$\xe$
the particle can jump only to the left or to the right, 
the jumps in the transversal directions cannot happen since 
the corresponding sites are not in the interlacement set.
Hence, the jump from~$\xe$ to the right happens
with probability $\frac{\beta}{\beta+1}$, and we can write
\begin{align}\label{deeptrap2}
 \Po_{\xe }\big[T_{\xt}<T_{Q(\xe, M, jn^{1/3})}\big]
&=  
\frac{\beta}{\beta+1}
\mathrm{P}^G_{\xe }[T_{\xt}<T_{\xe}].
\end{align}

Using the notation  $\pi^G(x)=\sum_{y\in G} c(x,y)$
and~$C^G$ for the effective conductance in~$G$, 
we can use some standard facts about electrical network theory
(see e.g.~(2.4) in~\cite{LP}) to obtain that
\[
\mathrm{P}^G_{\xe }[T_{\xt}<T_{\xe}]
=\frac{ C^G(\xe  
\leftrightarrow \xt)}{\pi^G(\xe )},
\]
and it is easy to see that 
$\pi^G(\xe )\leq \gamma  \beta^{x\cdot e_1 
+\lf\frac{3}{M}\ln n^{1/3}\rf}
=\gamma\beta^{x\cdot e_1 +\lf\frac{1}{M}\ln n\rf}$
(recall that~$x$ is a site on~$\partial^+\CC_M(jn^{1/3})$).  
Furthermore, on $\TT(x,M,jn^{1/3})$, there is a simple path linking~$\xe$
to~$\xt$ of length at most $\gamma(M\ln n)^3$ edges 
all with conductances at least $\beta^{x\cdot e_1 +\lf\frac{1}{M}\ln n\rf}$. 
Rayleigh's monotonicity principle then implies that 
\[
 C^G(\xe  \leftrightarrow \xt) \geq 
 \frac{\beta^{x\cdot e_1 +\lf\frac{1}{M}\ln n\rf}}{\gamma(M\ln n)^3},
 \]
 so that 
 \[
\mathrm{P}^G_{\xe }[T_{\xt}<T_{\xe }]
\geq \gamma(M\ln n)^{-3}.
 \]
  The previous inequality along with~(\ref{deeptrap1}) 
  and~(\ref{deeptrap2}) imply that on $\TT(x,M,jn^{1/3})$
 \begin{equation}
\label{deeptrap3}
 \Po_x\big[T_{\xt}<T_{\partial \CC_M((j+1)n^{1/3})} \big]
 \geq n^{-\gamma /M}.
 \end{equation}
 
 Moreover, on $\TT(x,M,jn^{1/3})$, we know that starting 
  from~$\xt$ we need to reach~$\xe $ before we can 
 exit $\CC_M((j+1)n^{1/3}))$. 
 Furthermore, we see by reversibility that
 \[
 \Po_{\xt}[T_{\xe }<T_{\xt}] = \frac{\pi^G(\xe )}{\pi^G(\xt)}  
 \Po_{\xe }[T_{\xt}<T_{\xe}]\leq \gamma  \beta^{-\frac{M}{3}\ln n},
 \]
 so the number of returns to $\xt$ before exiting 
 $\CC_M((j+1)n^{1/3}))$ is a geometric random variable 
 of parameter at most $\gamma \beta^{-\frac{M}{3}\ln n}$. This means 
 that the time to exit $\CC_M((j+1)n^{1/3})$ is larger than a
 geometric random variable of parameter
 at most $\gamma \beta^{-\frac{M}{3}\ln n}$,
 so for some uniformly positive~$\eps$ we have
 \[
 \Po_{\xt}[T_{\partial \CC_M((j+1)n^{1/3})} \geq \beta^{M\ln n}] 
 >\eps>0.
 \]
 
 This result along with~(\ref{deeptrap3}) and the Markov
 property implies that on any environment 
belonging to~$\TT(x,M,jn^{1/3})$
\begin{equation}
\label{quenched_delay}
 \Po_{x}[T_{\partial \CC_M((j+1)n^{1/3})} \geq \beta^{M\ln n}]
   \geq  n^{-\gamma /M}.
\end{equation}
  
Having dealt with the quenched probabilities, 
we move on.
For any finite sequence ${\tilde x}=(x_0,x_1,\ldots, x_m)$
of sites in~$\Z^3$ define
\[
  \Gamma_\omega({\tilde x}) 
   = \Po_{x_0}\big[\hX_1=x_1,\ldots,\hX_m=x_m\big].
\]
Let $x_1,\ldots,x$ be a sequence of sites in $\CC_M(jn^{1/3})$,
and assume that $x \in \partial^+\CC_M(jn^{1/3})$.
Write 
\begin{align}
 \lefteqn{\IP\big[\hH_j \mid \hX_1=x_1,\ldots,\hX_{\tau_j}=x\big]}
\nonumber\\
 &=
\frac{\bE_0\big(\Po_0[\hH_j \mid \hX_1=x_1,\ldots,\hX_{\tau_j}=x] 
\Gamma_\omega(0,x_1,\ldots,x)\big)}
{\bE_0 \Gamma_\omega(0,x_1,\ldots,x)}\nonumber\\
&=
\frac{\bE_0\big(\Po_x[\hH_j] 
\Gamma_\omega(0,x_1,\ldots,x)\big)}
{\bE_0 \Gamma_\omega(0,x_1,\ldots,x)}
\nonumber\\
&\geq \frac{\bE_0\big(\Po_x[\hH_j]\1{\omega\in\TT(x,M,jn^{1/3})}
\Gamma_\omega(0,x_1,\ldots,x)\big)}
{\bE_0 \Gamma_\omega(0,x_1,\ldots,x)}.
\label{annealed_conditional}
\end{align}
Note the following general fact: if $\xi,\eta \geq 0$ are random
variables, $\eta$ is measurable with respect to a 
sigma-algebra~$\mathcal{A}$, and $\bE(\xi\mid \mathcal{A})\geq \gamma_1$,
then $\bE (\xi\eta) \geq \gamma_1 \bE\eta$.
Let $\mathcal{A}^u_A$ be the sigma-algebra generated by the 
random interlacements of level~$u$ on the set~$A\subset\Z^3$.
Inequalities~\eqref{eq_find_trap} and~\eqref{quenched_delay} imply that
\[
 \bE\big(\Po_x[\hH_j]\1{\omega\in\TT(x,M,jn^{1/3})}
\mid \mathcal{A}^u_{\CC_M(jn^{1/3})}\big) \geq n^{-2\gamma/M},
\]
and, since $\Gamma_\omega(0,x_1,\ldots,x)$ is clearly
$\mathcal{A}^u_{\CC_M(jn^{1/3})}$-measurable, we finish
the proof of Lemma~\ref{lem_deeptrap} using~\eqref{annealed_conditional}
and the above general fact.
\end{proof}

\begin{lemma}
\label{lem_end}
There exists $\gamma_1>0$ such that 
\[
\IP\big[T_{\partial \CC_M(n)}\geq n^{\frac{1}{3}M\ln \beta}\big]
\geq 1-n^{-\gamma_1}.
\]
\end{lemma}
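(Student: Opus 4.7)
The plan is to show that, on a very likely event, the walk passes through the sub-cones $\CC_M(jn^{1/3})$ sequentially for $j=1,\ldots,n^{2/3}$, and that at each crossing Lemma~\ref{lem_deeptrap} provides a uniformly positive conditional chance of being delayed by the required amount in the next cone. Since we have of order $n^{2/3}$ such attempts, the probability that none succeeds will be super-polynomially small in~$n$.

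First I would restrict to the event $\Phi_n\cap\hG$, which by~\eqref{est_Phi_n} and~\eqref{prob_GG} has probability at least $1-CM^2n^{-3}$ for all large~$n$. On $\hG$ we have $\zeta=\infty$, hence $\hX\equiv X$ and $\hat{\tau}_j=\tau_j$ for every $j\leq n^{2/3}$; on $\Phi_n$ the walk exits each sub-cone through its positive side, so $X_{\tau_j}\in\partial^+\CC_M(jn^{1/3})$ and the automatic cases in~\eqref{def_hH} never trigger. Since $\CC_M((j+1)n^{1/3})\subset\CC_M(n)$ whenever $j\leq n^{2/3}-1$, the occurrence of $\hH_j$ for a single such~$j$ on $\Phi_n\cap\hG$ already forces
\[
T_{\partial\CC_M(n)}\geq\tau_{j+1}\geq\tau_j+n^{\frac{1}{3}M\ln\beta}.
\]

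Next I would chain the Lemma~\ref{lem_deeptrap} estimates along the filtration $(\widehat{\mathcal{F}}_{\hat{\tau}_j})_{j\geq 1}$. Since $\bigcap_{i<j}\hH_i^c\in\widehat{\mathcal{F}}_{\hat{\tau}_j}$ and $\IP[\hH_j^c\mid\widehat{\mathcal{F}}_{\hat{\tau}_j}]\leq 1-n^{-2\gamma/M}$, the tower property gives inductively
\[
\IP\Big[\bigcap_{j=1}^{n^{2/3}-1}\hH_j^c\Big]\leq (1-n^{-2\gamma/M})^{n^{2/3}-1}\leq \exp\bigl(-\tfrac12\,n^{2/3-2\gamma/M}\bigr)
\]
for all $n$ large enough, which is super-polynomially small as soon as $M$ is taken large enough that $2\gamma/M<2/3$. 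Combining with the bounds on $\IP[\Phi_n^c]$ and $\IP[\hG^c]$ then yields
\[
\IP\bigl[T_{\partial\CC_M(n)}<n^{\frac{1}{3}M\ln\beta}\bigr]\leq \IP[\Phi_n^c]+\IP[\hG^c]+\exp\bigl(-\tfrac12\,n^{2/3-2\gamma/M}\bigr)\leq n^{-\gamma_1}
\]
for some $\gamma_1>0$, the dominant contribution coming from $\IP[\hG^c]=O(n^{-3})$.

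The main technical point is the bookkeeping around the auxiliary process~$\hX$: Lemma~\ref{lem_deeptrap} was formulated for $\hX$ precisely so that the trap probability has a conditional lower bound uniform in the past history, and one must verify that on the good event $\Phi_n\cap\hG$ the $\hX$-statements transfer verbatim to the original walk~$X$ and its exit times~$\tau_j$. Once this identification is made, the remainder of the proof is the multiplicative chain-rule estimate above.
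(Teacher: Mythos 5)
Your proposal is correct and follows essentially the same route as the paper: restrict to $\Phi_n\cap\hG$, observe that on this event a fast exit from $\CC_M(n)$ forces all the $\hH_j$ to fail, and then chain the conditional bound of Lemma~\ref{lem_deeptrap} along the filtration $(\widehat{\mathcal{F}}_{\hat{\tau}_j})$ to make $\IP[\bigcap_j\hH_j^\complement]$ super-polynomially small. Your explicit verification that $\hX$ coincides with $X$ on $\hG$ and that the automatic cases of~\eqref{def_hH} do not trigger on $\Phi_n$ is exactly the bookkeeping the paper performs implicitly.
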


\begin{proof}
First, observe that Lemma~\ref{lem_deeptrap} implies
that 
\begin{equation}
\label{n2/3}
 \IP\Big[\bigcap_{j=1}^{n^2/3} \hH_j^\complement\Big]
\leq \exp\big(-\gamma' n^{\frac{2}{3}-\frac{\gamma}{M}}\big)
\end{equation}
(indeed, we have $n^{2/3}$ tries with success probability 
at least~$n^{-\gamma/M}$, independently of the past).
 
Now, recalling the notation of $\Phi_n$ at~\ref{def_Phi_n}, write
\begin{align*}
 \IP\big[T_{\partial \CC_M(n)} < n^{\frac{1}{3}M\ln \beta}\big]
&\leq \IP\big[T_{\partial \CC_M(n)} < n^{\frac{1}{3}M\ln \beta},\Phi_n\big]
   + \IP[\Phi_n^\complement]\\
&\leq \bE\big(\Po\big[T_{\partial \CC_M(n)} 
< n^{\frac{1}{3}M\ln \beta},\Phi_n\big]\1{\hG}\big)
 + \bP[\hG^\complement]
   + \IP[\Phi_n^\complement]\\
&= \bE\Big(\Po\Big[\bigcap_{j=1}^{n^{2/3}} \hH_j^\complement\Big]
   \1{\hG}\Big)
+ \bP[\hG^\complement]
   + \IP[\Phi_n^\complement]\\
&\leq \IP\Big[\bigcap_{j=1}^{n^{2/3}} \hH_j^\complement\Big]
+ \bP[\hG^\complement]
   + \IP[\Phi_n^\complement],
\end{align*}
and we use~\eqref{est_Phi_n}, \eqref{prob_GG},
and~\eqref{n2/3} to conclude the proof of Lemma~\ref{lem_end}.
%
\end{proof}

We now finish the proof of Theorem~\ref{theorem_3d}. 
Indeed, Lemma~\ref{lem_end} together with Borel-Cantelli's lemma
imply that 
\begin{equation}
\label{denouement}
 \IP\big[T_{\partial \CC_M(2^k)}
   \geq 2^{\frac{k}{3}M\ln \beta} \text{ for almost all }k\big] = 1.
\end{equation}
So, for all large 
enough~$t\in \big[2^{\frac{k}{3}M\ln \beta}, 
2^{\frac{k+1}{3}M\ln \beta}\big)$ we have $X_t\in \CC_M(2^{k+1})$.
Since $y\in \CC_M(n)$ implies $\|y\| \leq M n$,
we obtain
\[
 \limsup_{t\to\infty}\frac{\ln\|X_t\|}{\ln t}
 \leq \limsup_{k\to\infty}\frac{(k+1)\ln 2 + \ln M}
{\frac{1}{3}kM\ln\beta \ln 2 } = \frac{3}{M\ln \beta} 
\qquad \text{$\IP$-a.s.,}
\]
which proves Theorem~\ref{theorem_3d} since~$M$ is arbitrary.
\qed

\subsection{Dimension $d\geq 4$}
\label{s_d4}

In this section we prove Theorem~\ref{t_big_d_0speed}.

\begin{figure}
 \centering \includegraphics{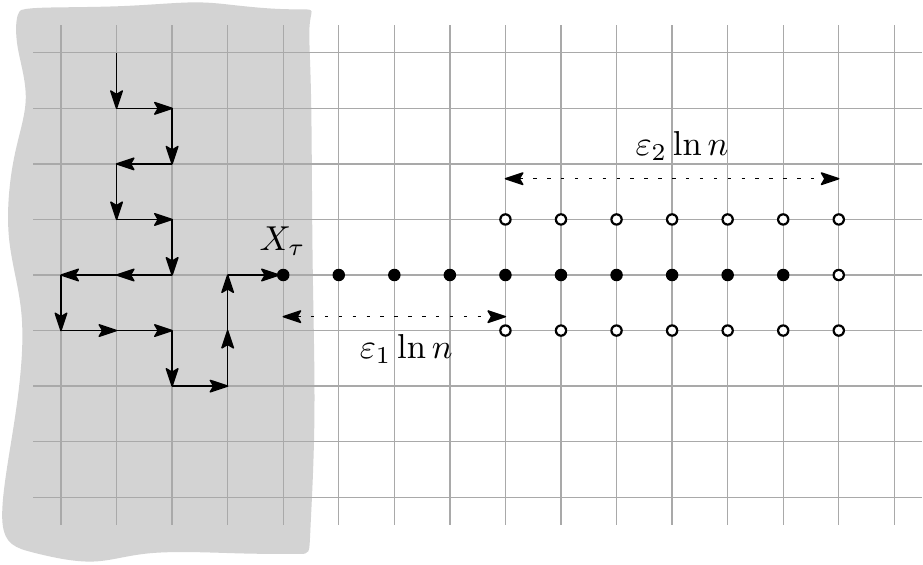} 
\caption{Traps in higher dimensions ($\eps_{1,2}$ are supposed to
be small enough). 
The shaded area corresponds
to the already explored part of the environment; $\bullet$'s belong 
to the interlacement set, and $\circ$'s are vacant
(the states of other sites can be arbitrary). }
\label{f_d4trap}
\end{figure}

The proof of this result is \emph{very} similar to the 
proof of Theorem~\ref{theorem_3d}, so we only indicate where
changes have to be made. The point is that, in dimensions~$d\geq 4$,
it costs too much to have a trap as on Figure~\ref{f_trap}.
Instead, we use a simpler kind of traps, see Figure~\ref{f_d4trap}.
 When the particle faces a yet unexplored region, we just 
ask that there is a straight segment of sites belonging to
the interlacement set of length~$\ln n$ times a small constant,
and the rightmost part of this segment is surrounded by vacant 
sites, as shown on the picture. It can be shown that the capacity
of the ``quiver'' of the vacant sites is approximately $\gamma \eps_2\ln n$,
so the cost of having this quiver in the environment 
is roughly $n^{-\gamma'\eps_2}$, that is, power in~$n$, but with
a small power. 
Then, it can be easily seen that the 
cost of having the straight segment of occupied sites is similar,
roughly $n^{-\gamma''(\eps_1+\eps_2)}$. 
Also, the decoupling argument works even better because of the~$s^{d-2}$ in Proposition~\ref{p_main3}.
So, it does not cost more than $n^{-\gamma''(\eps_1+\eps_2)}$ to have a trap like this each time
when the particle faces the unexplored region. Now, regardless of our choice of $\epsilon_{1,2}$, 
it is clear that if~$\beta$ is very large, then the walk will
spend a lot of time (say, at least~$n^2$) in the trap with 
probability at least~$n^{-\gamma_1}$, where~$\gamma_1$ 
can be made arbitrarily small by decreasing~$\eps_{1,2}$.
This shows the result.
\qed

\section*{Acknowledgements}
Part of this work was done during the visit of the first 
author to IMECC--UNICAMP, supported by FAPESP (grant 2012/07166--9).
The first author is thankful to NSERC and FRQNT for financial support.
The second author thanks
CNPq (grant 300886/2008--0) 
for financial support. 
Both authors thank the referee for careful
reading of the manuscript and valuable comments and suggestions.

\end{document}